\documentclass[12pt,a4paper,reqno]{amsart}
\usepackage{amssymb,amsfonts,mathrsfs}
\usepackage[T1]{fontenc}
\usepackage[utf8]{inputenc}
\usepackage[english]{babel}
\usepackage{enumerate}
\usepackage{color}
\usepackage{graphicx}

\advance\textwidth30mm \advance\hoffset-14mm
\advance\textheight30mm \advance\voffset-18mm
\newtheorem{thm}{Theorem}
\newtheorem{lem}{Lemma}
\newtheorem{cor}{Corollary}

\theoremstyle{remark}

\theoremstyle{definition}

\newcommand\sign{\operatorname{sign}}
\renewcommand\Re{\operatorname{Re}}

\newcommand\supp{\operatorname{supp}}

\newcommand\Cdot{{\mskip2mu\cdot\mskip2mu}}
\newcommand\PW{\mathit{PW}}
\newcommand\res{\operatorname*{res}}
\newcommand\pv{\operatorname{p.v.}}

\begin{document}

\title{The Nikolskii Constant in Arbitrary Dimension}
\author{D.\,V.~Gorbachev}
\address{Saint Petersburg State University}
\email{dvgmail@mail.ru}
\thanks{The study was carried out with the financial support of the Ministry of Science
and Higher Education of the Russian Federation in the framework of a scientific
project under agreement No.~075-15-2025-013.}
\date{}

\begin{abstract}
We study the radial extremal function $\varphi(|\Cdot|)$ arising in the problem
of finding the sharp Nikolskii constant $\mathcal C_d$ in
$\mathit{PW}_1^1(\mathbb R^d)$ for arbitrary dimension $d\ge1$. We prove a factorization
$\varphi=\Phi_1\Phi_2$, where $\Phi_1$ and $\Phi_2$ are entire functions of
exponential type $1/2$ satisfying a functional equation and second-order
differential equations with polynomial coefficients. As a result, the original extremal problem is reduced to a one-dimensional
spectral problem depending on at most $d+1$ parameters. We also obtain a zeta interpretation of the coefficients of the
polynomial appearing in the functional equation and a multiplicative
equilibrium condition for the zeros of the extremal function. These results can be used to construct several algorithms for computing
$\mathcal C_d$.
\end{abstract}

\keywords{Nikolskii constant, extremal function, Paley--Wiener space, Fourier
transform, Cauchy transform, differential equation, spectral problem.}

\subjclass[2020]{Primary 41A17, 41A44; Secondary 30D15, 34A30}

\maketitle

\section{Introduction}

\subsection{Statement of the problem and the extremal function}
Let $\PW_r^p(\mathbb R^d)$ be the Paley--Wiener class of functions of
spherical exponential type at most~$r$ that belong to $L^p(\mathbb R^d)$,
$d\in\mathbb N$. Consider the extremal problem for the sharp Nikolskii
constant
\begin{equation}\label{c-prob}
\mathcal C_d^{-1}=\inf\{\|f\|_1\colon f\in\PW_1^1(\mathbb R^d),\ f(0)=1\}.
\end{equation}
This problem has a long history; see, for example, the survey \cite{Go21}. A
substantial advance for $d=1$ was made in \cite{Bo25}. In \cite{Go26}, we
extended the results of \cite{Bo25} to odd dimensions.

We will make frequent use of the notation and results of \cite{Go26}. In
particular, the problem~\eqref{c-prob} has a unique radial extremal function
$\varphi_d(|\Cdot|)$; see also \cite{Da21}. For a fixed $d$, write
$\varphi=\varphi_d$, and denote its positive zeros by
$0<\tau_1<\tau_2<\ldots,$ $\tau_k=\tau_{k,d}$. The function~$\varphi$ is an
even entire function of exact exponential type~$1$, bounded in modulus by one
on $\mathbb R$, and has the canonical product
\begin{equation}\label{phi-product}
\varphi(z)=\prod_{k=1}^{\infty}
\Bigl(1-\frac{z^2}{\tau_k^2}\Bigr),\quad z\in\mathbb C,
\end{equation}
where $\tau_k\sim\pi k$ as $k\to\infty$ for fixed $d$.

\subsection{Extremality condition}
Let $v_{d}$ be the volume of the unit ball $B_1^d\subset\mathbb R^d$ and
$\omega_{d-1}=dv_{d}$ the surface area of the unit sphere $\mathbb S^{d-1}$.
Set
\[
a_d=\frac1{2v_{d}\mathcal C_d}.
\]
The solution of \eqref{c-prob} is based on a variational extremality condition,
which in one-dimensional form reads
\begin{equation}\label{extremality-1D}
g(0)=\frac{\mathcal C_d\omega_{d-1}}2 \int_{\mathbb
R}\sigma(x)g(x)|x|^{d-1}\,dx,
\end{equation}
where $g$ is an arbitrary even entire function of exponential type at most~$1$
such that $|x|^{d-1}g(x)\in L^1(\mathbb R)$, and
\[
\sigma(x)=\sign\varphi(x),\quad x\in\mathbb R.
\]
Instead of \eqref{extremality-1D}, we will use its consequence (see
Lemma~\ref{lem:extremality-fourier}), which in $\mathcal S'(\mathbb R)$ has
the form
\begin{equation}\label{eq:extremality-fourier}
\widehat{|x|^{d-1}\sigma}=\frac{4a_d}{d}\quad \text{on $(-1,1)$},
\end{equation}
where
\[
\widehat f(t)=\int_{\mathbb R}f(x)e^{-itx}\,dx
\]
denotes the one-dimensional Fourier transform, extended to
$\mathcal S'(\mathbb R)$ by duality in the standard way.

\subsection{Known estimates}
It was proved in \cite{Da21} that
\[
2^{-d}\le \mathcal L^*(d)\le
{}_1F_2\Bigl(\frac d2;\frac d2+1,\frac d2+1;
-\frac{\beta_d^2}{4}\Bigr),
\]
where
\begin{equation}\label{def-Lstar}
\mathcal L^*(d)=\frac{(2\pi)^d}{v_{d}}\,\mathcal C_d
\end{equation}
is the normalized Nikolskii constant, and $\beta_d$ is the first positive zero
of the Bessel function $J_{d/2}$. Hence
\[
2^{-d}\le \mathcal L^*(d)\le \bigl(\sqrt{2/e}\,\bigr)^{d(1+o(1))},\quad
d\to\infty,
\]
where $\sqrt{2/e}=0.857\ldots.$

For the first three dimensions, the numerical values of the normalized
constant and the corresponding general lower and upper bounds are
\[
\begin{array}{c|c|c|c|c}
d & \text{lower bound} & \mathcal L^*(d) & \text{upper bound} & \text{source}\\ \hline
1 & 0.5\phantom{00}
& 0.54092882190183058939\ldots
& 0.589\ldots
& \text{\cite{Bo25}}\\
2 & 0.25\phantom{0}
& 0.28214506418970912412\ldots
& 0.382\ldots
& \text{present paper}\\
3 & 0.125
& 0.14502922550655639263\ldots
& 0.261\ldots
& \text{\cite{Go26}}
\end{array}
\]

\subsection{From odd dimensions to a general proof}
In \cite{Go26}, the oddness of the dimension was used essentially. If
$d=2m+1$, then
\[
|x|^{d-1}\sigma(x)=x^{2m}\sigma(x),
\]
so the left-hand side of \eqref{eq:extremality-fourier} is expressed through
a finite-order derivative of the distribution $\widehat\sigma$. It follows
that the restriction of $\widehat\sigma$ to $(-1,1)$ coincides with a
polynomial.

For even $d=2m$, the weight $|x|^{d-1}=|x|^{2m-1}$ is no longer a polynomial,
and a direct use of the same argument leads to a nonlocal operator on the
Fourier side. The key observation is to replace $\sigma$ by
$\tilde\sigma(x)=\sign(x)\sigma(x)$. Then
\[
|x|^{2m-1}\sigma(x)=x^{2m-1}\tilde\sigma(x)
\]
and an ordinary finite-order derivative appears again, now for
$\tilde\sigma$. After differentiating~$\tilde\sigma$, the Cauchy transform of
the resulting measure gives a meromorphic function whose boundary
values have one-sided Fourier spectra.

In the odd-dimensional proof, a similar meromorphic function was obtained
from the derivative of the analytic signature $S$, constructed by means of
the Hermite--Biehler theorem. The new approach shows that it can be obtained
directly as the Cauchy transform of the measure $\sigma'$. Thus the
Hermite--Biehler theory is not needed for the main spectral construction, and
the two cases now admit a unified proof.

\subsection{Main result}
To state the theorem, we need several definitions. Define the signature
\begin{equation}\label{def-sigma-tilde}
\tilde\sigma(x)=
\begin{cases}
\sigma(x), &\text{$d$ odd},\\ \sign(x)\sigma(x), &\text{$d$ even},
\end{cases}
\end{equation}
and put, in the sense of distributions,
\[
\nu=\tilde\sigma'.
\]
Then for every $d$
\begin{equation}\label{weight-sigma-tilde}
|x|^{d-1}\sigma(x)=x^{d-1}\tilde\sigma(x).
\end{equation}

Let
\[
\epsilon=
\begin{cases}
0, &\text{$d$ odd},\\ 1, &\text{$d$ even}.
\end{cases}
\]
Introduce the nonnegative integer
\[
N=\frac{d-1-\epsilon}{2}.
\]
We will show below (see Subsection~\ref{subsec-Pd}) that the restriction of the
distribution $2^{1-\epsilon}i^\epsilon\,\widehat{\tilde\sigma}$ to $(-1,1)$
coincides with the real polynomial
\begin{equation}\label{intro-Pd}
P_d(t)=\sum_{n=0}^Np_nt^{2n+\epsilon},\quad
p_N=(-1)^N2^{1-\epsilon}\,\frac{4a_d}{d!}.
\end{equation}
Using the coefficients $p_n$, define the even polynomial
\begin{equation}\label{intro-Rd}
R_d(z)=\frac\epsilon2\,z^{2(N+\epsilon)} +2^{\epsilon-3}\sum_{n=0}^N(-1)^n
(2n+\epsilon+1)!\,p_nz^{2(N-n)}= \sum_{j=0}^{N+\epsilon}r_jz^{2j},
\end{equation}
for which $r_0=R_d(0)=a_d$.

Set
\begin{equation}\label{def-lambda}
\lambda_q=\sum_{k=1}^{\infty}
\frac{(-1)^k}{\tau_k^{2q+\epsilon+1}}<0,\quad q\ge0.
\end{equation}
The series converges absolutely except when $\epsilon=q=0$, in which case it
converges by the alternating series test. Its negativity follows from the
strict increase of~$\tau_k$. Using the coefficients $r_j$ and the numbers
$\lambda_q$, define the polynomial
\begin{equation}\label{intro-Kd}
K_d(z)=\frac14\,z^{d+1}
+2\sum_{n=0}^N
\biggl(\sum_{j=0}^nr_j\lambda_{n-j}\biggr)z^{2n+\epsilon}.
\end{equation}

In the domain $\mathbb C\setminus(-\infty,0]$, set
\begin{equation}\label{intro-hd}
h_d(z)=\frac\epsilon2\,\log z -\sum_{j=0}^N\frac{r_j}{d-2j}\,z^{-(d-2j)},\quad
h_d'(z)=\frac{R_d(z)}{z^{d+1}},
\end{equation}
where $\log z$ denotes the principal branch of the logarithm.

Finally, define the factors $\Phi_j=\Phi_{j,d}$, $j=1,2$, of the extremal
function. For odd $d$, put
\begin{equation}\label{def-Phi12-odd}
\Phi_1(z)=\prod_{k=1}^{\infty}
\Bigl(1+(-1)^k\frac{z}{\tau_k}\Bigr),\quad
\Phi_2(z)=\Phi_1(-z),
\end{equation}
where consecutive factors in the product for $\Phi_1$ are grouped in pairs.
For even~$d$, put
\begin{equation}\label{def-Phi12-even}
\Phi_1(z)=\prod_{k=1}^{\infty}
\Bigl(1-\frac{z^2}{\tau_{2k-1}^2}\Bigr),\quad
\Phi_2(z)=\prod_{k=1}^{\infty}
\Bigl(1-\frac{z^2}{\tau_{2k}^2}\Bigr).
\end{equation}
In both cases,
\[
\varphi(z)=\Phi_1(z)\Phi_2(z),
\]
and the positive zeros of $\Phi_1$ and $\Phi_2$ are respectively
$\tau_1,\tau_3,\ldots$ and $\tau_2,\tau_4,\ldots.$ The functions~$\Phi_j$
are entire functions of finite exponential type (see
Lemma~\ref{lem-Phi12}). We also introduce
\begin{equation}\label{def-Phi12-tilde}
\tilde\Phi_1(z)=e^{-h_d(z)}\Phi_1(z),\quad
\tilde\Phi_2(z)=e^{h_d(z)}\Phi_2(z),\quad z\in \mathbb C\setminus(-\infty,0].
\end{equation}

We can now state the main result.

\begin{thm}\label{thm-main}
Let $d\ge1$, and use the notation introduced above.

\textup{(a)} For all $z\in\mathbb C$, the following functional identity holds:
\begin{equation}\label{intro-FE}
z^{d+1}\bigl(\Phi_1'(z)\Phi_2(z)-\Phi_1(z)\Phi_2'(z)\bigr)
-2R_d(z)\Phi_1(z)\Phi_2(z)=-2a_d.
\end{equation}

\textup{(b)} The functions $\tilde\Phi_1$, $\tilde\Phi_2$ are linearly
independent solutions of the equation
\begin{equation}\label{intro-common-ODE}
y''+\frac{d+1}{z}\,y'
+\Bigl(\frac{K_d(z)}{z^{d+1}}
-\frac{R_d(z)^2}{z^{2d+2}}\Bigr)y=0
\end{equation}
in the domain $\mathbb C\setminus(-\infty,0]$, and their Wronskian is
\[
W(\tilde\Phi_1,\tilde\Phi_2)= \tilde\Phi_1\tilde\Phi_2'
-\tilde\Phi_1'\tilde\Phi_2 =\frac{2a_d}{z^{d+1}}.
\]
Equivalently, the functions $\Phi_1$, $\Phi_2$ satisfy the system
\begin{equation}\label{intro-ODE}
\begin{cases}
z^{d+1}\Phi_1''+\bigl((d+1)z^d-2R_d\bigr)\Phi_1'
+\bigl(K_d-R_d'\bigr)\Phi_1=0,\\
z^{d+1}\Phi_2''+\bigl((d+1)z^d+2R_d\bigr)\Phi_2'
+\bigl(K_d+R_d'\bigr)\Phi_2=0.
\end{cases}
\end{equation}
\end{thm}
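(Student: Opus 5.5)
We first establish (a) and then deduce (b) from it together with a second identity. For (a), the plan is to start from the restriction of $\widehat{\tilde\sigma}$ to $(-1,1)$ (Subsection~\ref{subsec-Pd}), which equals $2^{\epsilon-1}i^{-\epsilon}P_d$, so that the measure $\nu=\tilde\sigma'$ has Fourier transform equal on $(-1,1)$ to $it$ times that polynomial. Since $\tilde\sigma$ jumps by $\pm2$ exactly at the zeros $\pm\tau_k$ (and at $0$ when $d$ is even), $\nu$ is an explicit alternating sum of point masses $2(-1)^k\delta_{\pm\tau_k}$, plus $2\delta_0$ when $\epsilon=1$. We form its Cauchy transform
\[
F(z)=\frac1{2\pi i}\int\frac{d\nu(x)}{x-z}.
\]
This $F$ is meromorphic with simple poles at the $\pm\tau_k$ and residues of alternating sign. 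Its partial-fraction expansion coincides with that of $\Phi_1'/\Phi_1-\Phi_2'/\Phi_2$, up to a constant factor and a term $\epsilon/z$, where the sign pattern is checked separately for (\ref{def-Phi12-odd}) and (\ref{def-Phi12-even}).

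The key analytic input is as follows. The boundary values $F(x\pm i0)$ have one-sided Fourier spectra. Because $\hat\nu$ is a polynomial on $(-1,1)$, the function $F$ differs from the rational function obtained by inverting that polynomial piece by something of controlled exponential growth. Concretely, we expect $e^{\pm iz}$-type factors to show that
\[
z^{d+1}\Bigl(\frac{\Phi_1'}{\Phi_1}-\frac{\Phi_2'}{\Phi_2}\Bigr)-2R_d(z)
\]
equals $G(z)/\varphi(z)$ with $G$ entire of exponential type at most $1$, hence of type $0$ after the cancellations. Multiplying by $\varphi=\Phi_1\Phi_2$ gives the left side of (\ref{intro-FE}) as an entire function. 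We then bound it by a polynomial along $i\mathbb R$ and on $\mathbb R$, using $\tau_k\sim\pi k$ and Lemma~\ref{lem-Phi12} on the exponential type $1/2$ of the $\Phi_j$. A Phragmén--Lindelöf argument then shows it is a polynomial. Expanding $\Phi_1'/\Phi_1-\Phi_2'/\Phi_2$ at infinity along the imaginary axis should show that all positive powers cancel precisely because of the choice of the $r_j$ from the $p_n$. This is where the factorials $(2n+\epsilon+1)!$ arise, via $\int x^{m}\,d\nu$-type moments, i.e.\ the derivatives of $\hat\nu$ at $0$. Evaluating at $z=0$ gives the constant $-2R_d(0)\varphi(0)=-2a_d$. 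I expect this step to be the main obstacle: controlling growth and identifying the polynomial exactly, especially the $\epsilon/z$ term and the $\frac\epsilon2 z^{2(N+1)}$ leading part of $R_d$ in even dimension.

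For (b), dividing (\ref{intro-FE}) by $z^{d+1}$ and using $h_d'=R_d/z^{d+1}$ gives
\[
W(\tilde\Phi_1,\tilde\Phi_2)=\tilde\Phi_1\tilde\Phi_2'-\tilde\Phi_1'\tilde\Phi_2=2a_d z^{-d-1},
\]
which is nonzero, so $\tilde\Phi_1$ and $\tilde\Phi_2$ are independent. Any two independent functions determine the second-order equation $y''+py'+qy=0$ with $p=-W'/W=(d+1)/z$. The remaining task is to identify $q$, which equals $-(\tilde\Phi_1''+p\tilde\Phi_1')/\tilde\Phi_1$. We show that
\[
z^{d+1}\Phi_1''+((d+1)z^d-2R_d)\Phi_1'-R_d'\Phi_1
\]
divided by $\Phi_1$ is an entire function. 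Its only possible poles are at the zeros of $\Phi_1$. At such a zero, differentiating (\ref{intro-FE}) and evaluating there shows that the expression $z^{d+1}\Phi_1''+((d+1)z^d-2R_d)\Phi_1'$ vanishes, so the singularity is removable. The analogous statement holds for $\Phi_2$ with the sign of $R_d$ flipped. Both candidate $q$'s then coincide, since $q$ is symmetric in the pair once the Wronskian is fixed; hence a single entire function $-K$ appears. Growth bounds, as in part (a), show that $K$ is a polynomial of degree $d+1$. We then match its coefficients by expanding $\Phi_1'/\Phi_1$ and $\Phi_1''/\Phi_1$ at $z=0$ via the sums $\sum\tau_k^{-m}$. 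The alternating combinations give $\lambda_q$, and convolving them with the $r_j$ produces the coefficients in (\ref{intro-Kd}). The leading coefficient $\frac14 z^{d+1}$ comes from the type-$1/2$ asymptotics $\Phi_j''/\Phi_j\to-\frac14$. Finally, substituting $\tilde\Phi_j=e^{\mp h_d}\Phi_j$ turns (\ref{intro-ODE}) into (\ref{intro-common-ODE}) by routine computation.
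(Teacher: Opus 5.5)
\textbf{There is a genuine gap in part (a): the step that identifies the polynomial does not work.} Write $E$ for the left-hand side of (\ref{intro-FE}). Then $E=-\frac z2\,\varphi\,\Theta_d$ with $\Theta_d=z^dM-2\epsilon z^{d-1}+4R_d/z$. Your ``cancellation of all positive powers at $i\infty$'' is exactly what the choice of $R_d$ achieves, but it only gives $\Theta_d(iy)=O(|y|^Le^{-|y|})$. Since $|\varphi(iy)|$ grows like $e^{|y|(1+o(1))}$, this shows only that $E(iy)$ is polynomially bounded. The exponentially small remainder of $\Theta_d(iy)$ is governed by $\widehat{\Theta_d^+}$ near $t=1$, about which (\ref{eq:extremality-fourier}) says nothing, and after multiplication by $\varphi(iy)$ it contributes at polynomial order. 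So no expansion at infinity can tell you which even polynomial $E$ is, and evaluating at $z=0$ fixes only its constant term.

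The missing ingredient is the integrability of the extremal function. At the zeros, $E(\tau_k)=(-1)^{k+1}\tau_k^{d+1}\varphi'(\tau_k)$, and Bernstein's inequality alone only gives $\deg E\le d+1$. The paper writes $\varphi\Theta_d-4a_d/z=z\Pi_d$ with $\Pi_d$ an even polynomial and obtains $\Pi_d(\tau_k)=2(-1)^k\tau_k^{d-1}\varphi'(\tau_k)-4a_d\tau_k^{-2}$. It then applies the Plancherel--P\'olya inequality (Lemma~\ref{lem-PP}) to $g=x^{d-1}\varphi\in\PW_1^1$ on a uniformly separated subsequence of $(\tau_k)$. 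This forces $\Pi_d(\tau_{k_j})\to0$, hence $\Pi_d\equiv0$. Nothing in your plan plays this role.

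\textbf{The polynomiality step in (a) is also unjustified as stated.} A polynomial bound for $E$ on $\mathbb R$ does not follow from $\tau_k\sim\pi k$; it would need control of $\Phi_j,\Phi_j'$ on $\mathbb R$. Moreover, Lemma~\ref{lem-Phi12} gives only finite exponential type; type $1/2$ is Corollary~\ref{cor-Phi-growth}, which is deduced from the theorem, so using it here is circular. The paper avoids pointwise real-axis bounds altogether. It first checks, via $M(iy)=O(y^{-1})$, that $\widehat{M^+}$ has no component at $0$. It then builds $\Theta_d$ explicitly so that $\supp\widehat{\Theta_d^+}\subset[1,\infty)$ and $\supp\widehat{\Theta_d^-}\subset(-\infty,-1]$, and multiplies by $\varphi\in\PW_1^1$ to get $\supp\widehat{H_d}\subset\{0\}$. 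If you prefer Phragm\'en--Lindel\"of, work on the lines $\operatorname{Im}z=\pm1$, where the spectral gap gives $|\varphi\Theta_d|\lesssim(1+|z|)^L$.

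\textbf{Part (b) is essentially sound.} The Wronskian, $p=-W'/W=(d+1)/z$, the coincidence of the two expressions for $q$, and the removability at the zeros all match the paper. Getting $\deg K=d+1$ with leading coefficient $\frac14$ from $L_1(iy)\to-i/2$ and $L_1'(iy)\to0$ is a valid and shorter alternative to the paper's Sturm zero counting on $(0,\infty)$. These limits follow from Riemann sums using only $\tau_k\sim\pi k$, not the type. Two caveats remain. First, $K$ being a polynomial needs an argument such as Lemma~\ref{lem-logder}, not the bounds of part (a). Second, the low coefficients take the form $2\sum_j r_j\lambda_{n-j}$ only after averaging the $\Phi_1$- and $\Phi_2$-expressions, because it is $L_1+L_2=\varphi'/\varphi=O(z)$ that removes the non-alternating power sums.
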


For the analysis of the problem, it is convenient to use the Liouville
transformation and write \eqref{intro-common-ODE} in the standard form without
the first derivative.

\begin{cor}\label{cor-liouville}
On the positive half-line, put
\[
u_j(x)=x^{(d+1)/2}\tilde\Phi_j(x),\quad j=1,2.
\]
Then $u_1$, $u_2$ are linearly independent solutions of
\begin{equation}\label{intro-Liouville-ODE}
u''+V_d(x)u=0,\quad x>0,
\end{equation}
where the potential is
\begin{equation}\label{intro-V}
V_d(x)=\frac{K_d(x)}{x^{d+1}} -\frac{R_d(x)^2}{x^{2d+2}} -\frac{d^2-1}{4x^2}.
\end{equation}
Their Wronskian is $W(u_1,u_2)=2a_d$.
\end{cor}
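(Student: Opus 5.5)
The corollary follows from Theorem~\ref{thm-main}(b) by the standard Liouville substitution that removes the first-derivative term. I will write \eqref{intro-common-ODE} as $y''+p(x)y'+q(x)y=0$ on $(0,\infty)$, with
\[
p(x)=\frac{d+1}{x},\qquad q(x)=\frac{K_d(x)}{x^{d+1}}-\frac{R_d(x)^2}{x^{2d+2}} .
\]
The positive half-line lies inside $\mathbb C\setminus(-\infty,0]$, where $\tilde\Phi_j$ are holomorphic solutions. So their restrictions to $x>0$ are smooth solutions of the real ODE. (They are possibly complex-valued, since $h_d$ is real on $(0,\infty)$, but this is irrelevant.)

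The first step is the substitution $y=e^{-\frac12\int p}\,u=x^{-(d+1)/2}u$, which is exactly $u_j=x^{(d+1)/2}\tilde\Phi_j$. The standard identity then gives
\[
u''+\Bigl(q-\tfrac12p'-\tfrac14p^2\Bigr)u=0 .
\]
I will verify this directly by computing $y'$ and $y''$ from $y=x^{-\alpha}u$ with $\alpha=(d+1)/2$. The $u'$ terms cancel because $-2\alpha x^{-\alpha-1}+p\,x^{-\alpha}=0$. The coefficient of $u$ becomes
\[
x^{-\alpha}\Bigl(\alpha(\alpha+1)x^{-2}-\alpha p x^{-1}+q\Bigr).
\]

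The second step is the arithmetic. We have $\alpha(\alpha+1)-\alpha(d+1)=\alpha(\alpha+1-2\alpha)=\alpha(1-\alpha)$, and
\[
\alpha(1-\alpha)=\frac{d+1}{2}\cdot\frac{1-d}{2}=-\frac{d^2-1}{4}.
\]
Hence the potential is $V_d=q-(d^2-1)/(4x^2)$, matching \eqref{intro-V}. Multiplying through by $x^{\alpha}\ne0$ gives \eqref{intro-Liouville-ODE}.

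The third step is the Wronskian. For $u_j=x^\alpha\tilde\Phi_j$, the terms involving $(x^\alpha)'$ cancel in $u_1u_2'-u_1'u_2$, leaving
\[
W(u_1,u_2)=x^{2\alpha}W(\tilde\Phi_1,\tilde\Phi_2)=x^{d+1}\cdot\frac{2a_d}{x^{d+1}}=2a_d .
\]
Since $a_d>0$, the Wronskian is nonzero, so $u_1,u_2$ are linearly independent. This is consistent with the constancy of the Wronskian for an equation with no first-derivative term.

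There is no real obstacle here. The only point requiring care is the sign bookkeeping in $\alpha(\alpha+1)-\alpha(d+1)$, and confirming that the restriction to $x>0$ legitimately inherits the equation and the Wronskian identity from Theorem~\ref{thm-main}(b).
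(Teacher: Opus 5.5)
Your proposal is correct and matches the paper's proof. The paper applies its Liouville-transformation lemma (Lemma~\ref{lem-Liouville} with $p=(d+1)/x$) to the equation for $\tilde\Phi_j$ on $(0,\infty)$ and then uses $W(u_1,u_2)=x^{d+1}W(\tilde\Phi_1,\tilde\Phi_2)=2a_d$; you do the same, except that you verify the transformation by direct computation instead of citing the lemma. A minor point: your parenthetical remark is muddled. Since $h_d$ is real on $(0,\infty)$ and the $\Phi_j$ are real entire functions, the $\tilde\Phi_j$ are in fact real-valued there, though this does not affect the argument.
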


Equation \eqref{intro-Liouville-ODE} allows us to apply Sturm theory directly
and to study the asymptotic behavior of the factors $\Phi_j$. Together with
their other properties, this gives the following result.

\begin{cor}\label{cor-Phi-growth}
The functions $\Phi_1$, $\Phi_2$ have exact exponential type $1/2$ and
\begin{equation}\label{intro-decay}
\Phi_1(x)=O\bigl(|x|^{-(d+1-\epsilon)/2}\bigr),\quad
\Phi_2(x)=O\bigl(|x|^{-(d+1+\epsilon)/2}\bigr),
\quad |x|\to\infty.
\end{equation}
In particular,
\[
\varphi(x)=O\bigl(|x|^{-(d+1)}\bigr),\quad |x|\to\infty.
\]
\end{cor}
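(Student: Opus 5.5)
Exact type $1/2$ comes from the zero-counting density; the decay \eqref{intro-decay} comes from a WKB analysis of \eqref{intro-Liouville-ODE}, combined with the Wronskian identity to tell the two factors apart.

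\emph{Exponential type.} By Lemma~\ref{lem-Phi12}, $\Phi_1$ and $\Phi_2$ have finite exponential type. Since $\tau_k\sim\pi k$, the zeros of each factor are $\tau_1,\tau_3,\dots$ or $\tau_2,\tau_4,\dots$, with asymptotic spacing $2\pi$. Each factor is a real entire function of finite type whose zeros are all real and have density $1/(2\pi)$ on each half-axis. The genus-one canonical product then has type exactly $1/2$ by Lindelöf's theorem. One can also use Hadamard factorization with the known zero density, together with the fact that the type of $\varphi=\Phi_1\Phi_2$ is $1$ and that the indicators add for functions of completely regular growth.

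\emph{Potential.} On $x>0$, expand $V_d$ from \eqref{intro-V} at infinity. The leading term of $K_d/x^{d+1}$ is $\tfrac14$, because $K_d$ has leading term $z^{d+1}/4$. The term $R_d^2/x^{2d+2}$ is $O(x^{-2})$ for odd $d$; for even $d$ its leading part is $\frac{\epsilon^2}{4}x^{2(N+1)}/x^{2d+2}=\frac{1}{4x^2}$. Hence
\[
V_d(x)=\tfrac14+\frac{c}{x^2}+O(x^{-3}),
\]
with the possible odd-power terms handled as well. Since $V_d-\tfrac14$ is integrable at infinity, Levinson/WKB asymptotics give that every solution of \eqref{intro-Liouville-ODE} is bounded. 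In fact $u=A\cos(x/2)+B\sin(x/2)+o(1)$.

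\emph{From $u_j$ to $\Phi_j$.} We have $\Phi_1=x^{-(d+1)/2}e^{h_d}u_1$ and $\Phi_2=x^{-(d+1)/2}e^{-h_d}u_2$. From \eqref{intro-hd}, $e^{h_d(x)}=x^{\epsilon/2}(1+O(x^{-1}))$ as $x\to+\infty$, since the negative powers tend to zero. Boundedness of $u_j$ therefore gives
\[
\Phi_1(x)=O\bigl(x^{-(d+1-\epsilon)/2}\bigr),\qquad \Phi_2(x)=O\bigl(x^{-(d+1+\epsilon)/2}\bigr).
\]
This is exactly \eqref{intro-decay} for $x\to+\infty$. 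For $x\to-\infty$ the two parities behave differently:
\begin{itemize}
\item[] for even $d$, both $\Phi_j$ are even, so the bound transfers directly;
\item[] for odd $d$, $\Phi_2(x)=\Phi_1(-x)$ and $\epsilon=0$, so both exponents coincide and the bound again transfers.
\end{itemize}
Multiplying the two estimates gives $\varphi(x)=O(|x|^{-(d+1)})$.

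\emph{Main obstacle.} The delicate point is justifying boundedness of $u_j$ uniformly, i.e. the integrability of $V_d-\tfrac14$. This requires checking that the odd-power terms in $V_d$ decay at least like $x^{-2}$. These terms come from $K_d(x)/x^{d+1}$, where the next term after $\tfrac14$ is of order $x^{2N+\epsilon-d-1}=x^{-2}$, and from the cross term $\epsilon r_N x^{2N}\cdot x^{2(N+1)}/x^{2d+2}$, which is of order $x^{-3}$ or smaller. Once this is verified, the standard Levinson theorem for $u''+(\tfrac14+q)u=0$ with $q\in L^1(1,\infty)$ applies. For exactness of the type, one also notes that the $u_j$ are nontrivial, since $W(u_1,u_2)=2a_d\ne0$. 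Consequently $\limsup|u_j|>0$, which rules out type smaller than $1/2$ and confirms the oscillation with frequency $1/2$.
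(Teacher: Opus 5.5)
The decay part of your proposal follows the paper. You use the Liouville form, $V_d=\tfrac14+O(x^{-2})$, boundedness of $u_j$, the asymptotics $e^{\pm h_d(x)}=x^{\pm\epsilon/2}(1+o(1))$, and the parity transfer. The paper bounds $u_j$ with the energy $u'^2+\tfrac14u^2$ and Gronwall; your appeal to Levinson's theorem amounts to the same. For even $d$ the leading part of $R_d^2$ is $\tfrac14x^{4(N+1)}=\tfrac14x^{2d}$, not $x^{2(N+1)}$, but your conclusion $\tfrac1{4x^2}$ is correct.

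\textbf{The gap is the exact type $1/2$.} Lindelöf's theorem only says the type is finite. Real zeros of density $1/(2\pi)$ on each half-axis do not determine the type, because they cannot detect a factor $e^{bz}$: $e^{bz}\sin(z/2)$ has the same zero density and type $\sqrt{1/4+b^2}$.

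For even $d$ this is harmless. Each $\Phi_j$ is a product in $z^2$, so $\log|\Phi_j(z)|\le\log\Phi_j(i|z|)$, and $\log\Phi_j(iy)\sim|y|/2$ follows directly from the density.

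For odd $d$ it is the whole issue. The genus-one canonical product of $\tau_1,-\tau_2,\tau_3,\dots$ equals $\Phi_1(z)e^{-\lambda_0z}$ with $\lambda_0<0$, and its type is $\sqrt{1/4+\lambda_0^2}>1/2$. So your sentence is false read literally. What must be proved is that the paired product $\Phi_1$ carries no linear exponential component. Your fallbacks do not supply this:
\begin{itemize}
\item since $\Phi_2(z)=\Phi_1(-z)$, the relation $h_{\Phi_1}(\theta)+h_{\Phi_1}(\theta+\pi)=|\sin\theta|$ is satisfied by $\tfrac12|\sin\theta|+b\cos\theta$ for every real $b$;
\item $\limsup|u_j|>0$ on the real axis says nothing about growth off the axis.
\end{itemize}

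What is missing is an upper bound for the type. The paper gets it by Fourier-transforming \eqref{intro-ODE}. The coefficient of $\partial_t^{d+1}\widehat{\Phi_j}$ is a nonzero multiple of $\tfrac14-t^2$, so the compactly supported $\widehat{\Phi_j}$ solves a regular equation on $|t|>1/2$ and must vanish there. Then $1=\operatorname{type}\varphi\le\operatorname{type}\Phi_1+\operatorname{type}\Phi_2$ forces equality.

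Within your approach you can repair this by proving the real-axis bounds first and then arguing as follows:
\begin{itemize}
\item the real-axis bounds give $h_{\Phi_1}(0),h_{\Phi_1}(\pi)\le0$;
\item the identity $|\Phi_1(iy)|^2=\Phi_1(iy)\Phi_1(-iy)=\varphi(iy)$ gives $h_{\Phi_1}(\pm\pi/2)=\tfrac12$;
\item trigonometric convexity of the indicator then yields $h_{\Phi_1}(\theta)\le\tfrac12|\sin\theta|$, hence type exactly $1/2$.
\end{itemize}
Equivalently, apply the Cartwright--Levinson density theorem to the Cartwright-class function $\Phi_1$.
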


We also give an equation for the extremal function $\varphi$ itself.

\begin{cor}\label{cor-phi-ODE}
The extremal function $\varphi$ satisfies, for $z\ne0$, the equation
\[
\varphi'''+\frac{3(d+1)}z\,\varphi''
+\Bigl(4q_d+\frac{(d+1)(2d+1)}{z^2}\Bigr)\varphi'
+\Bigl(2q_d'+\frac{4(d+1)}z\,q_d\Bigr)\varphi=0,
\]
where
\[
q_d(z)=\frac{K_d(z)}{z^{d+1}}-\frac{R_d(z)^2}{z^{2d+2}}.
\]
\end{cor}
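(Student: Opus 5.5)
The plan is to reduce the corollary to a general fact about products of solutions of a second-order linear equation, using Theorem~\ref{thm-main}(b). The key observation is that the exponential factors in \eqref{def-Phi12-tilde} cancel in the product:
\[
\tilde\Phi_1(z)\tilde\Phi_2(z)=e^{-h_d(z)}\Phi_1(z)\,e^{h_d(z)}\Phi_2(z)=\Phi_1(z)\Phi_2(z)=\varphi(z),\quad z\in\mathbb C\setminus(-\infty,0].
\]
By Theorem~\ref{thm-main}(b), $y_1=\tilde\Phi_1$ and $y_2=\tilde\Phi_2$ both solve $y''+py'+q_dy=0$ there, with $p(z)=(d+1)/z$ and $q_d$ as in the statement. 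So $\varphi$ is a product of two solutions of the common equation \eqref{intro-common-ODE}.

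Next I will derive the third-order equation satisfied by any product $w=y_1y_2$ of two solutions of $y''+py'+qy=0$. The steps are as follows.
\begin{enumerate}
\item Differentiate once: $w'=y_1'y_2+y_1y_2'$.
\item Differentiate again and eliminate $y_j''$ using the equation:
\[
w''=2y_1'y_2'-pw'-2qw .
\]
\item Differentiate $y_1'y_2'$ and use the equation once more:
\[
(y_1'y_2')'=-2p\,y_1'y_2'-q\,w' .
\]
\item Differentiate the expression for $w''$ and substitute $2y_1'y_2'=w''+pw'+2qw$ from the second step. This yields
\[
w'''+3p\,w''+\bigl(2p^2+p'+4q\bigr)w'+\bigl(4pq+2q'\bigr)w=0 .
\]
\end{enumerate}
Inserting $p=(d+1)/z$ gives
\[
2p^2+p'=\frac{2(d+1)^2-(d+1)}{z^2}=\frac{(d+1)(2d+1)}{z^2},\qquad 4pq_d=\frac{4(d+1)}{z}\,q_d .
\]
This is exactly the claimed equation, valid on $\mathbb C\setminus(-\infty,0]$. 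An equivalent route is through Corollary~\ref{cor-liouville}: the product $u_1u_2$ satisfies $W'''+4V_dW'+2V_d'W=0$, and converting back via $u_1u_2=x^{d+1}\varphi$ gives the same result. The direct computation above is shorter, so I will use it.

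Finally, I will remove the restriction to the slit plane. The function $\varphi$ is entire, and $q_d$ is rational with its only pole at $0$, since $K_d$ and $R_d$ are polynomials. Hence the left-hand side of the equation is holomorphic in $\mathbb C\setminus\{0\}$. It vanishes on the open set $\mathbb C\setminus(-\infty,0]$, so it vanishes identically on $\mathbb C\setminus\{0\}$ by the identity theorem.

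The only real care point is the bookkeeping in the elimination of the third step. Once Theorem~\ref{thm-main}(b) is granted, there is no genuine obstacle; in particular, $h_d$ is multivalued, but it never enters the final product.
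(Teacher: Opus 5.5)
Your proposal is correct and follows the paper's proof: the paper also uses $\tilde\Phi_1\tilde\Phi_2=\varphi$, invokes the third-order equation for products of solutions of $y''+py'+qy=0$, and substitutes $p=(d+1)/z$, $q=q_d$. Your explicit elimination and your identity-theorem extension from $\mathbb C\setminus(-\infty,0]$ to $\mathbb C\setminus\{0\}$ fill in details the paper leaves implicit.
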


\subsection{The spectral problem}
The polynomial $R_d$ is completely determined by the coefficients
$p_0,\ldots,p_N$, while the polynomial $K_d$ depends on
$p_0,\ldots,p_N$ and the numbers $\lambda_0,\ldots,\lambda_N$. In turn, all
these $d+1-\epsilon$ quantities can be recovered from the extremal function
$\varphi$: the coefficients $p_n$ are obtained from the restriction of
$\widehat{\tilde\sigma}$ to $(-1,1)$, while the numbers $\lambda_q$ are given
by alternating power sums over the zeros $\tau_k$.

Thus the original extremal problem admits several equivalent one-dimensional
spectral formulations. For qualitative analysis, equation
\eqref{intro-Liouville-ODE} is especially convenient. For numerical
computations, the choice of the form depends on the dimension: for $d=1$ and
$d=3$, the problems obtained by applying the Fourier transform to the
equations for $\Phi_j$ are effective, whereas for $d=2$ it is convenient to
use the system \eqref{intro-ODE} directly and the resulting recurrence
relations for the Taylor coefficients. A convenient general algorithm can
be based on the equilibrium condition \eqref{a-tau}.

\subsection{Outline of the proof of Theorem~\ref{thm-main}}
Identity \eqref{weight-sigma-tilde} replaces the weight $|x|^{d-1}$ in the
extremality condition \eqref{eq:extremality-fourier} by the monomial
$x^{d-1}$ in every dimension. The derivative $\nu=\tilde\sigma'$ is a locally
finite discrete measure. Its Cauchy transform
\[
M(z)=\pv\int_{\mathbb R}\frac{d\nu(t)}{z-t}
\]
has boundary values $M^{\pm}(x)$ with one-sided Fourier spectra. The restriction of
$\widehat\nu$ to $(-1,1)$ is determined by the polynomial $P_d$, and the
special choice of the polynomial $R_d$ allows us to construct
\[
\Theta_d(z)=z^dM(z)-2\epsilon z^{d-1}+\frac{4R_d(z)}z,
\]
for which
\[
\supp\widehat{\Theta_d^+}\subset[1,\infty),\quad \supp\widehat{\Theta_d^-}\subset(-\infty,-1].
\]
The defect
\[
H_d=\varphi\Theta_d-\frac{4a_d}{z}
\]
is an entire function whose spectrum is concentrated at zero. Hence it is a
polynomial. The values of $H_d$ at the zeros of $\varphi$ and the
Plancherel--P\'olya inequality imply $H_d\equiv0$.~If
\begin{equation}\label{intro-Psi}
\Psi(z)=z^\epsilon\,\frac{\Phi_2(z)}{\Phi_1(z)},
\end{equation}
then $M=2\Psi'/\Psi$, and the identity $\varphi\Theta_d=4a_d/z$ becomes the
functional identity in part~(a).

To prove part~\textup{(b)}, we differentiate \eqref{intro-FE} and substitute
the zeros of $\Phi_1$ and $\Phi_2$. This shows that the meromorphic functions
\begin{align}
\mathcal P_1(z)&=
-\frac{z^{d+1}\Phi_1''+\bigl((d+1)z^d-2R_d\bigr)\Phi_1'}{\Phi_1},
\label{def-P1}\\
\mathcal P_2(z)&=
-\frac{z^{d+1}\Phi_2''+\bigl((d+1)z^d+2R_d\bigr)\Phi_2'}{\Phi_2}
\label{def-P2}
\end{align}
have no poles. Lemma~\ref{lem-logder} on the logarithmic derivative shows
that $\mathcal P_1$, $\mathcal P_2$ are polynomials. Differentiating
\eqref{intro-FE} gives
\[
\mathcal P_2-\mathcal P_1=2R_d',
\]
and comparison of the expansions at zero shows that their half-sum coincides
with the polynomial $K_d$ defined in \eqref{intro-Kd}. Thus
\[
\mathcal P_1=K_d-R_d',\quad \mathcal P_2=K_d+R_d',
\]
which gives the system \eqref{intro-ODE}.

\subsection{Organization of the paper}
Section~\ref{sec-aux} contains auxiliary statements.
Section~\ref{sec-functional} proves part~\textup{(a)} of
Theorem~\ref{thm-main}: the Cauchy transform of the discrete measure gives the
spectral gap and the functional identity. Section~\ref{sec-ode} proves
part~\textup{(b)} and constructs the general differential equation.
Section~\ref{sec-corollaries} proves the corollaries concerning the Liouville
transformation, exponential type, and the equation for the extremal function.
In Section~\ref{sec-d2}, the general scheme is written out in detail for the
first even dimension $d=2$, together with a numerical computation.
Section~\ref{sec-zeta} is devoted to the zeta interpretation of the
coefficients of the polynomial $R_d$, while
Section~\ref{sec-final} collects additional consequences and discusses the
asymptotics as $d\to\infty$.

\section{Auxiliary statements}\label{sec-aux}

\begin{lem}\label{lem:extremality-fourier}
For every $d\in\mathbb N$, in $\mathcal S'(\mathbb R)$,
\[
\widehat{|x|^{d-1}\sigma}=\frac{4a_d}{d}\quad \text{on $(-1,1)$}.
\]
\end{lem}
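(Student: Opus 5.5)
We derive the lemma from the one-dimensional extremality condition \eqref{extremality-1D} by testing it against suitable even functions $g$ built from Schwartz functions with compactly supported Fourier transform. Fix an even test function $\psi\in C_c^\infty(-1,1)$, so that $\widehat\psi$ is even. Pairing in $\mathcal S'(\mathbb R)$ gives
\[
\langle \widehat{|x|^{d-1}\sigma},\psi\rangle=\int_{\mathbb R}|x|^{d-1}\sigma(x)\widehat\psi(x)\,dx .
\]
Put $g=\widehat\psi$. By Paley--Wiener, $g$ is an even entire function of exponential type strictly less than $1$, and it decays faster than any power on $\mathbb R$, so $|x|^{d-1}g\in L^1(\mathbb R)$. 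Hence $g$ is admissible in \eqref{extremality-1D}, which yields
\[
\int_{\mathbb R}|x|^{d-1}\sigma\,\widehat\psi\,dx=\frac{2}{\mathcal C_d\omega_{d-1}}\,\widehat\psi(0)=\frac{2}{\mathcal C_d\omega_{d-1}}\int_{\mathbb R}\psi(t)\,dt .
\]
Since $\omega_{d-1}=dv_d$, the constant is
\[
\frac{2}{\mathcal C_d d v_d}=\frac4d\cdot\frac1{2v_d\mathcal C_d}=\frac{4a_d}{d}.
\]
Thus $\langle \widehat{|x|^{d-1}\sigma}-\tfrac{4a_d}{d},\psi\rangle=0$ for every even $\psi\in C_c^\infty(-1,1)$.

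For odd $\psi$ both sides vanish separately. Indeed, $|x|^{d-1}\sigma$ is even because $\varphi$ is even, so its Fourier transform is an even distribution and annihilates odd test functions; the constant $\tfrac{4a_d}{d}$ also annihilates odd $\psi$. Splitting an arbitrary $\psi\in C_c^\infty(-1,1)$ into its even and odd parts, both still supported in $(-1,1)$, gives the identity on all test functions supported in $(-1,1)$. This is exactly the claim that the restriction of $\widehat{|x|^{d-1}\sigma}$ to $(-1,1)$ equals the constant $4a_d/d$.

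The only point needing care is that $|x|^{d-1}\sigma$ is a tempered distribution and that the pairing equals the absolutely convergent integral. This holds because $|\sigma|\le1$, so $|x|^{d-1}\sigma$ is a polynomially bounded measurable function. Its distributional Fourier transform is therefore defined by $\langle \widehat f,\psi\rangle=\langle f,\widehat\psi\rangle$, which is the integral above.

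Beyond this justification there is no real obstacle. The main thing to verify is that the class of admissible $g$ in \eqref{extremality-1D} really includes all $\widehat\psi$ with $\psi$ even in $C_c^\infty(-1,1)$, which follows from the exponential-type and decay properties already noted.
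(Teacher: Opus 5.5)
Your proof is correct and follows the paper's argument. You apply \eqref{extremality-1D} to the Fourier transform of an even $\psi\in C_c^\infty((-1,1))$, use $\omega_{d-1}=dv_d$ to identify the constant $4a_d/d$, and extend to all test functions by the even/odd decomposition. The paper takes $g(x)=\frac1{2\pi}\int_{\mathbb R}\psi(t)e^{ixt}\,dt$ instead of $g=\widehat\psi$; for even $\psi$ these differ only by the factor $2\pi$, which is immaterial because both sides are linear in $g$.
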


\begin{proof}
Let $\psi\in C_c^\infty((-1,1))$ be even and
\[
g(x)=\frac1{2\pi}\int_{\mathbb R}\psi(t)e^{ixt}\,dt.
\]
Then $g\in\mathcal S(\mathbb R)$ is even and has exponential type less
than~$1$. Hence, by \eqref{extremality-1D}, the identities
$\omega_{d-1}=dv_{d}$ and $a_d=(2v_{d}\mathcal C_d)^{-1}$ give
\[
\bigl\langle\widehat{|x|^{d-1}\sigma},\psi\bigr\rangle
=\frac{4a_d}{d}\int_{\mathbb R}\psi(t)\,dt.
\]

Thus the identity is proved for even test functions. Both distributions in the
statement of the lemma are even, so decomposing an arbitrary
$\psi\in C_c^\infty((-1,1))$ into its even and odd parts gives the desired
identity.
\end{proof}

\begin{lem}\label{lem-Phi12}
The functions $\Phi_1$, $\Phi_2$ defined in
\eqref{def-Phi12-odd} and \eqref{def-Phi12-even} are entire functions of order
at most~$1$ and finite exponential type.
\end{lem}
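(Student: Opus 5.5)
We reduce everything to the zero set $\{\tau_k\}$ and the growth theory of canonical products. We use two facts from the introduction: $\tau_k$ is strictly increasing and $\tau_k\sim\pi k$. Consequently the counting function $n(r)=\#\{k:\tau_k\le r\}$ satisfies $n(r)=r/\pi+o(r)$. The subsequences $\tau_1,\tau_3,\dots$ and $\tau_2,\tau_4,\dots$ each have counting functions $n_j(r)=r/(2\pi)+o(r)$, with $|n_1(r)-n_2(r)|\le1$ by interlacing.

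\emph{Even $d$.} Here $\Phi_1,\Phi_2$ are canonical products of genus zero in the variable $z^2$. Since $\sum\tau_{2k-1}^{-2}<\infty$, they converge locally uniformly and are entire. The bound $\log|\Phi_j(z)|\le\log\prod(1+|z|^2/\tau_{2k}^2)$ is estimated by the standard integral
\[
\log\prod_k\Bigl(1+\frac{r^2}{\tau^2_{k}}\Bigr)=\int_0^\infty\log\Bigl(1+\frac{r^2}{t^2}\Bigr)\,dn_j(t)=2r^2\int_0^\infty\frac{n_j(t)}{t(t^2+r^2)}\,dt.
\]
Because $n_j(t)\le Ct$, this is at most $2Cr^2\int_0^\infty dt/(t^2+r^2)=\pi Cr$. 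This gives order at most $1$ and finite type. (With the sharp density one gets type $1/2$, but only finiteness is claimed.)

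\emph{Odd $d$.} Convergence of the product with consecutive factors grouped in pairs is handled as follows. We write
\[
\Bigl(1-\frac{z}{\tau_{2k-1}}\Bigr)\Bigl(1+\frac{z}{\tau_{2k}}\Bigr)=1-z\Bigl(\frac1{\tau_{2k-1}}-\frac1{\tau_{2k}}\Bigr)-\frac{z^2}{\tau_{2k-1}\tau_{2k}}.
\]
Since $\sum_k(\tau_{2k-1}^{-1}-\tau_{2k}^{-1})$ is a convergent series with positive terms (telescoping-type, bounded by $\tau_1^{-1}$), and $\sum(\tau_{2k-1}\tau_{2k})^{-1}<\infty$, the grouped product converges absolutely and locally uniformly. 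Hence $\Phi_1$ is entire, and so is $\Phi_2(z)=\Phi_1(-z)$.

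To bound growth in the odd case, we compare with a genus-one representation. We have $\Phi_1(z)=e^{cz}\prod(1+(-1)^kz/\tau_k)e^{-(-1)^kz/\tau_k}$, where $c=\sum_k(-1)^k/\tau_k=\lambda_0$ is finite by the alternating series test. The Weierstrass product of genus one with zeros of counting function $O(r)$ has order at most $1$. The standard estimate $\log|E_1(u)|\le C|u|^2/(1+|u|)$ gives
\[
\log|\Phi_1(z)|\le r\sum_{\tau_k\le r}\tau_k^{-1}-\ldots,
\]
which is only $O(r\log r)$. So this comparison alone does not yield finite type.

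\emph{Main obstacle.} The main obstacle is showing finite type (not just order one) in the odd case, where the zeros lie on both half-axes and the logarithmic term must cancel. We plan to exploit the alternating structure via Lindelöf's theorem: an entire function of genus one with zeros $z_k$ has finite type iff $n(r)=O(r)$ and the partial sums $\sum_{|z_k|\le r}z_k^{-1}$ are bounded. Here the zeros of $\Phi_1$ are $-(-1)^k\tau_k$. Thus
\[
\sum_{|z_k|\le r}\frac1{z_k}=-\sum_{\tau_k\le r}\frac{(-1)^k}{\tau_k},
\]
which is a partial sum of an alternating series with decreasing terms and hence bounded by $1/\tau_1$.

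Both Lindelöf conditions hold, so $\Phi_1$ (and $\Phi_2$) has finite exponential type. The even case can also be viewed through the same theorem, since symmetric zeros make the partial sums vanish. Order at most one follows in all cases from $n(r)=O(r)$.
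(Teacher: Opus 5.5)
Your proof is correct. In the odd case, however, it takes a longer route than the paper.

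\textbf{The paper's argument.} The paper stays with the grouped product throughout. Put $a_k=\tau_{2k-1}^{-1}-\tau_{2k}^{-1}>0$ and $b_k=(\tau_{2k-1}\tau_{2k})^{-1}$, so each factor is $1-za_k-z^2b_k$, exactly the decomposition you wrote down for convergence. The paper then uses
\[
|1-za_k-z^2b_k|\le(1+ra_k)(1+r^2b_k)\le e^{ra_k}(1+r^2b_k).
\]
Together with $\sum_k a_k\le1/\tau_1$ and $\tau_{2k-1}\tau_{2k}\ge ck^2$, this gives $|\Phi_1(z)|\le e^{C_1r}\prod_k(1+C_2r^2/k^2)\le C_3e^{C_4r}$.

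\textbf{Your ``main obstacle''.} It disappears once the cancellation is kept inside each pair of consecutive zeros. It only arises because the genus-one representation moves that cancellation into the global factor $e^{cz}$, where the crude bound for $E_1$ loses it and gives only $O(r\log r)$.

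\textbf{Comparison of routes.} You get around this with the Hadamard form ($c=\lambda_0$), $n(r)=O(r)$, and bounded $S(r)$ from the alternating series, all fed into Lindel\"of's theorem. That is valid and more general: it needs only bounded partial sums of $1/z_k$, with no pairing of consecutive zeros. But it relies on a classical theorem whose sufficiency half is proved by essentially the same kind of estimate. The paper's argument is two lines and self-contained.

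\textbf{Even case.} Your argument matches the paper's. You merely write the comparison with $\prod_k(1+Cr^2/k^2)$ as a Stieltjes integral against $n_j(t)\le Ct$.
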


\begin{proof}
Suppose first that $d$ is odd. Group consecutive factors:
\[
\Phi_1(z)=\prod_{k=1}^\infty
\Bigl(1-\frac{z}{\tau_{2k-1}}\Bigr)
\Bigl(1+\frac{z}{\tau_{2k}}\Bigr).
\]
Since $1/\tau_k\downarrow0$,
\[
\sum_{k=1}^{\infty}
\Bigl(\frac1{\tau_{2k-1}}-\frac1{\tau_{2k}}\Bigr)<\infty,
\]
and $\tau_{2k-1}\tau_{2k}\ge ck^2$. Hence the product converges locally
uniformly and, for $r=|z|$,
\[
|\Phi_1(z)|\le e^{C_1r}
\prod_{k=1}^{\infty}\Bigl(1+\frac{C_2r^2}{k^2}\Bigr)
\le C_3e^{C_4r}.
\]
The same holds for $\Phi_2(z)=\Phi_1(-z)$.

Now suppose that $d$ is even. Local uniform convergence of the products
follows from $\sum_k\tau_k^{-2}<\infty$. Moreover,
$\tau_{2k-1},\tau_{2k}\ge ck$, and for $j=1,2$
\[
\log\max_{|z|\le r}|\Phi_j(z)|
\le\sum_{k=1}^{\infty}\log\Bigl(1+\frac{Cr^2}{k^2}\Bigr)=O(r).
\]
This proves the assertion in both cases.
\end{proof}

\section{Proof of Theorem \ref{thm-main}~\textup{(a)}}
\label{sec-functional}

We split the proof into short steps.

\subsection{The signature $\tilde\sigma$ and the measure $\nu$}
The signature $\tilde\sigma$ defined in \eqref{def-sigma-tilde} belongs to
$L^\infty(\mathbb R)$, is piecewise constant, and has parity
\begin{equation}\label{sigma-tilde-parity}
\tilde\sigma(-x)=(-1)^\epsilon\tilde\sigma(x).
\end{equation}
On the positive half-line, its jump at $\tau_k$ is $2(-1)^k$. At the origin,
the jump is~$2$ when $\epsilon=1$ and is absent when $\epsilon=0$. Therefore,
in the sense of locally finite signed measures,
\begin{equation}\label{atomic-nu}
\nu=2\epsilon\delta_0
+2\sum_{k=1}^{\infty}(-1)^k
\bigl(\delta_{\tau_k}+(-1)^d\delta_{-\tau_k}\bigr).
\end{equation}
Since $\tau_k\sim\pi k$, we have $\nu\in\mathcal S'(\mathbb R)$.

\subsection{The polynomial $P_d$}\label{subsec-Pd}
From \eqref{eq:extremality-fourier} and \eqref{weight-sigma-tilde}, we obtain
\begin{equation}\label{sigma-a}
\widehat{x^{d-1}\tilde\sigma}= i^{d-1}\partial_t^{d-1}\,\widehat{\tilde\sigma}
=\frac{4a_d}{d} \quad\text{on $(-1,1)$}.
\end{equation}
Hence the restriction of the distribution $\widehat{\tilde\sigma}$ to
$(-1,1)$ coincides with a polynomial of degree at most $d-1$. By \eqref{sigma-tilde-parity}, it is even when $\epsilon=0$ and odd
when $\epsilon=1$. Since $\tilde\sigma$ is real-valued, this polynomial is real-valued
when $\epsilon=0$ and purely imaginary when $\epsilon=1$. Therefore, after the
normalization
\[
P_d=2^{1-\epsilon}i^\epsilon\,\widehat{\tilde\sigma}\quad\text{on $(-1,1)$}
\]
we obtain the real polynomial defined in \eqref{intro-Pd},
\begin{equation}\label{def-Pd}
P_d(t)=\sum_{n=0}^Np_nt^{2n+\epsilon},\quad 2N+\epsilon=d-1.
\end{equation}
Now \eqref{sigma-a} implies
\[
2^{\epsilon-1}i^{d-1-\epsilon}
\partial_t^{d-1}P_d=\frac{4a_d}{d}
\]
or, since $d-1-\epsilon=2N$,
\[
(-1)^N2^{\epsilon-1}(d-1)!\,p_N=\frac{4a_d}{d}.
\]
Hence
\begin{equation}\label{Pd-top}
p_N=(-1)^N2^{1-\epsilon}\,\frac{4a_d}{d!}.
\end{equation}

\subsection{The function $M$}
In what follows, boundary values are understood in the sense of tempered distributions:
\[
F^\pm=\lim_{\varepsilon\downarrow0}F(\Cdot\pm i\varepsilon) \quad\text{in
$\mathcal S'(\mathbb R)$}.
\]
We also set
\[
t_+^r=\mathbf1_{(0,\infty)}(t)t^r,\quad t_-^r=\mathbf1_{(-\infty,0)}(t)t^r,
\quad r\in\mathbb Z_{\ge0}.
\]

Define the Cauchy transform of the measure $\nu$ by the symmetric principal
value
\[
M(z)=\pv\int_{\mathbb R}\frac{d\nu(t)}{z-t}
=\lim_{L\to\infty}\int_{-L}^L\frac{d\nu(t)}{z-t}.
\]
From \eqref{atomic-nu}, we obtain
\begin{equation}\label{def-M}
M(z)=\frac{2\epsilon}{z}
+4\sum_{k=1}^{\infty}(-1)^k\,\frac{z^\epsilon\tau_k^{1-\epsilon}}{z^2-\tau_k^2}.
\end{equation}
When $\epsilon=0$, a term of the series is
$-4(-1)^k/\tau_k+O_K(\tau_k^{-3})$ on each compact set $K$ away from the
poles, so local uniform convergence follows from the alternating series test.
When $\epsilon=1$, the series converges absolutely and locally uniformly away
from the poles. Hence $M$ is meromorphic, and
\begin{equation}\label{M-parity}
M(-z)=(-1)^\epsilon M(z).
\end{equation}
Its poles are simple and
\begin{equation}\label{M-residues}
\res_{z=0}M=2\epsilon,\quad
\res_{z=\tau_k}M=2(-1)^k,\quad
\res_{z=-\tau_k}M=2(-1)^{k+\epsilon+1}.
\end{equation}

For the function \eqref{intro-Psi},
\[
\Psi(z)=z^\epsilon\,\frac{\Phi_2(z)}{\Phi_1(z)}
\]
logarithmic differentiation of the products
\eqref{def-Phi12-odd}, \eqref{def-Phi12-even} gives
\begin{equation}\label{M-logder}
M(z)=2\,\frac{\Psi'(z)}{\Psi(z)} =\frac{2\epsilon}{z}
+2\,\frac{\Phi_2'(z)}{\Phi_2(z)} -2\,\frac{\Phi_1'(z)}{\Phi_1(z)}.
\end{equation}

\begin{lem}\label{lem-Cauchy}
The boundary values $M^\pm$ admit the representations
\begin{equation}\label{M-local-spectrum}
\begin{aligned}
\widehat{M^+}&=c_\epsilon t_+^1P_d(t)+T_+,
&& \supp T_+\subset[1,\infty),\\
\widehat{M^-}&=-c_\epsilon t_-^1P_d(t)+T_-,
&& \supp T_-\subset(-\infty,-1],
\end{aligned}
\end{equation}
where
\begin{equation}\label{def-cepsilon}
c_\epsilon=-2^\epsilon\pi i^{2-\epsilon}.
\end{equation}
\end{lem}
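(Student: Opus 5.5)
The plan is to express the boundary values of the Cauchy transform through the Fourier transform of $\nu$, and then read off $\widehat\nu$ on $(-1,1)$ from $P_d$. Write $M(x\pm i\varepsilon)=(\nu*k_{\pm\varepsilon})(x)$ with $k_{\pm\varepsilon}(x)=1/(x\pm i\varepsilon)$. The symmetric principal value in the definition of $M$ is exactly what makes this convolution meaningful in $\mathcal S'$: by \eqref{atomic-nu}, $\nu$ is a tempered measure, and the pairing of symmetric atoms reproduces \eqref{def-M}. The standard formulas are
\[
\widehat{\tfrac1{x+i\varepsilon}}(t)=-2\pi i\,\mathbf 1_{(0,\infty)}(t)e^{-\varepsilon t},\qquad
\widehat{\tfrac1{x-i\varepsilon}}(t)=2\pi i\,\mathbf 1_{(-\infty,0)}(t)e^{\varepsilon t}.
\]
Hence $\widehat{M(\cdot\pm i\varepsilon)}=\mp2\pi i\,\mathbf 1_{\pm t>0}e^{\mp\varepsilon t}\,\widehat\nu$, and letting $\varepsilon\downarrow0$ gives
\[
\widehat{M^+}=-2\pi i\,\mathbf 1_{(0,\infty)}\widehat\nu,\qquad \widehat{M^-}=2\pi i\,\mathbf 1_{(-\infty,0)}\widehat\nu.
\]
The product of the distribution $\widehat\nu$ with the discontinuous indicator needs care at $t=0$. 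I would avoid this by passing to the limit only after pairing with test functions supported either in $(-1,1)$ or in $|t|>1/2$. Near the origin $\widehat\nu$ is a smooth (polynomial) function, as shown below, so the product there is unambiguous.

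Next I compute $\widehat\nu$ on $(-1,1)$. Since $\nu=\tilde\sigma'$, we have $\widehat\nu=it\,\widehat{\tilde\sigma}$. On $(-1,1)$ we have $\widehat{\tilde\sigma}=2^{\epsilon-1}i^{-\epsilon}P_d$ by the definition of $P_d$ in Subsection~\ref{subsec-Pd}, so $\widehat\nu=2^{\epsilon-1}i^{1-\epsilon}\,tP_d(t)$ there. Consequently, on $(-1,1)$,
\[
\widehat{M^+}=-2\pi i\cdot 2^{\epsilon-1}i^{1-\epsilon}\,t_+^1P_d=-2^\epsilon\pi i^{2-\epsilon}\,t_+^1P_d\cdot\tfrac12\cdot 2 .
\]
One needs to check the normalization here. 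Is it $2^{\epsilon-1}\cdot 2\pi=2^\epsilon\pi$? Indeed $-2\pi i\cdot2^{\epsilon-1}i^{1-\epsilon}=-2^\epsilon\pi i^{2-\epsilon}=c_\epsilon$, which matches \eqref{def-cepsilon}. The minus case gives $2\pi i\cdot 2^{\epsilon-1}i^{1-\epsilon}t_-^1P_d=-c_\epsilon t_-^1P_d$.

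Now set $T_+=\widehat{M^+}-c_\epsilon t_+^1P_d$. On $(-1,1)$ it vanishes by the computation above. On $(-\infty,0)$ it vanishes because $\widehat{M^+}$ is supported in $[0,\infty)$ (the limit of Fourier transforms of functions holomorphic and tempered in the upper half-plane) and $t_+^1P_d$ vanishes there. Hence $\supp T_+\subset[1,\infty)$, and symmetrically $\supp T_-\subset(-\infty,-1]$.

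The main obstacle is justifying the convolution/limit step rigorously. Two facts are needed: that $M(\cdot\pm i\varepsilon)$ really equals $\nu*k_{\pm\varepsilon}$ with the principal-value interpretation, which matters when $\epsilon=0$ because the series converges only conditionally; and that the limits hold in $\mathcal S'$. I would handle this by writing $\nu=\tilde\sigma'$ and integrating by parts:
\[
M(z)=\int\frac{\tilde\sigma(t)}{(z-t)^2}\,dt+(\text{boundary terms that vanish symmetrically}),
\]
which is absolutely convergent for $\Im z\ne0$. Then $M(\cdot\pm i\varepsilon)=\tilde\sigma*(-(x\pm i\varepsilon)^{-2})$ with $\tilde\sigma\in L^\infty$. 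This makes the Fourier transforms exact, namely $\widehat{\tilde\sigma}\cdot 2\pi i\,t\,\mathbf1e^{\mp\varepsilon t}$ up to sign, and they converge in $\mathcal S'$ as $\varepsilon\to0$. Multiplying by $t$ removes any ambiguity at $t=0$, since $t\,\mathbf 1_{(0,\infty)}$ is continuous.
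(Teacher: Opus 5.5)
Your proof is correct, but it handles the one delicate point, the origin $t=0$, differently from the paper.

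The paper works with the truncations $\nu_L$. These are finite measures, so $\widehat{M_L^\pm}=\mp2\pi i\,\mathbf 1_{\{\pm t>0\}}\widehat{\nu_L}$ is elementary. It passes to the limit only on $\mathbb R\setminus\{0\}$ and is left with a possible remainder $S_0$ supported at $\{0\}$. It excludes $S_0$ using the decay $M(iy)=O(y^{-1})$ from \eqref{M-iy-decay}. That estimate needs an alternating-series and unimodality argument, but it is reused later for the Mellin formula.

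Your first paragraph alone would not exclude such a term. The convolution identity for the nonintegrable kernel $1/(x\pm i\varepsilon)$ against the infinite measure $\nu$ is exactly what is unproved there. Smoothness of $\widehat\nu$ near $0$ only makes the product $\mathbf 1_{(0,\infty)}\widehat\nu$ meaningful; it does not show that the limit equals it.

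Your integration by parts closes the gap. You get $M(z)=-\int_{\mathbb R}\tilde\sigma(t)(z-t)^{-2}\,dt$. Mind the sign: your display omits the minus, although your convolution formula has it. The boundary terms vanish for any truncation, symmetric or not. So $M(\cdot\pm i\varepsilon)=-\tilde\sigma*(x\pm i\varepsilon)^{-2}$ is an honest $L^\infty*L^1$ convolution, with $\widehat{M(\cdot\pm i\varepsilon)}=\pm2\pi t\,\mathbf 1_{\{\pm t>0\}}e^{\mp\varepsilon t}\,\widehat{\tilde\sigma}$. There is no factor $i$ here. This matches $\mp2\pi i\,\mathbf 1_{\{\pm t>0\}}\widehat\nu$ because $\widehat\nu=it\widehat{\tilde\sigma}$, and it gives $c_\epsilon=2^\epsilon\pi i^{-\epsilon}$.

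What makes this rigorous is not just the continuity of $t_+$. For $\psi\in\mathcal S$, the functions $\chi_\varepsilon=t_+e^{-\varepsilon t}\psi$ satisfy $|\widehat{\chi_\varepsilon}(y)|\le C_\psi\min(1,y^{-2})$ uniformly in $\varepsilon$. Hence $\int\tilde\sigma\,\widehat{\chi_\varepsilon}$ is defined and converges as $\varepsilon\downarrow0$, which gives the existence of $M^\pm$ in $\mathcal S'$ for free. When $\supp\psi\subset(-1,1)$, mollifying $\chi_\varepsilon$ inside $(-1,1)$ shows that this integral equals $2^{\epsilon-1}i^{-\epsilon}\int P_d\chi_\varepsilon$.

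Your route therefore never produces anything supported at the origin. For this lemma it makes unnecessary the $S_0$ step, the decay estimate, and the term-by-term $\mathcal S'$ convergence check for $M_L^\pm$. The paper's route, in exchange, deals only with finite measures throughout. It also yields the bound on $M(iy)$ that it needs again in Section~\ref{sec-zeta}.
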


\begin{proof}
For $L>0$, let $\nu_L$ be the restriction of the measure $\nu$ to $[-L,L]$
and put
\[
M_L(z)=\int_{-L}^L\frac{d\nu(t)}{z-t}.
\]

We use the standard formulas (see \cite[Ch.~II]{Ge64})
\begin{equation}\label{boundary-Fourier}
\widehat{(x+i0)^{-1}}=-2\pi i\,\mathbf1_{(0,\infty)},\quad
\widehat{(x-i0)^{-1}}=2\pi i\,\mathbf1_{(-\infty,0)}.
\end{equation}
Hence, using the translation formulas,
\[
\widehat{M_L^+}
=-2\pi i\,\mathbf1_{(0,\infty)}\widehat{\nu_L},\quad
\widehat{M_L^-}
=2\pi i\,\mathbf1_{(-\infty,0)}\widehat{\nu_L}.
\]

We first show that $M_L^\pm$ converge in $\mathcal S'(\mathbb R)$. If
$\epsilon=0$, then the $k$th term of the series~\eqref{def-M} has the form
\[
2(-1)^k\Bigl(\frac1{z-\tau_k}-\frac1{z+\tau_k}\Bigr).
\]
As $\tau_k\to\infty$, its boundary value acts on a test function $\psi$ as
\[
-\frac{4(-1)^k}{\tau_k}\int_{\mathbb R}\psi(x)\,dx
+O_\psi(\tau_k^{-2}).
\]
The first series converges by the alternating series test, while the remainder
converges absolutely.

If $\epsilon=1$, then the $k$th term has the form
\[
2(-1)^k\Bigl(\frac1{z-\tau_k}+\frac1{z+\tau_k}\Bigr),
\]
and after combining the symmetric terms, its action on $\psi$ is
$O_\psi(\tau_k^{-2})$. Thus in this case the series converges absolutely.
Therefore $M_L^\pm$ converge in $\mathcal S'(\mathbb R)$ to $M^\pm$.

Passing to the limit, on $\mathbb R\setminus\{0\}$ we obtain
\begin{equation}\label{M-boundary-spectrum}
\widehat{M^+}
=-2\pi i\,\mathbf1_{(0,\infty)}\widehat\nu,\quad
\widehat{M^-}
=2\pi i\,\mathbf1_{(-\infty,0)}\widehat\nu.
\end{equation}
Moreover,
\[
\supp\widehat{M^+}\subset[0,\infty),\quad
\supp\widehat{M^-}\subset(-\infty,0].
\]

Since $\nu=\tilde\sigma'$, the definition of $P_d$ (see
Subsection~\ref{subsec-Pd}) gives
\[
\widehat\nu=it\,\widehat{\tilde\sigma} =2^{\epsilon-1}i^{1-\epsilon}tP_d\quad
\text{on $(-1,1)$}.
\]
Hence, by \eqref{M-boundary-spectrum},
\[
\widehat{M^+}=c_\epsilon t_+^1P_d(t)
\quad\text{on }(-1,1)\setminus\{0\}.
\]
Therefore
\[
\widehat{M^+}=c_\epsilon t_+^1P_d(t)+S_0+T_+,
\]
where
\[
\supp S_0\subset\{0\},\quad \supp T_+\subset[1,\infty).
\]

We show that $S_0=0$. For $\epsilon=0$, \eqref{def-M} gives
\[
M(iy)=-4\sum_{k=1}^\infty(-1)^k
\frac{\tau_k}{y^2+\tau_k^2}.
\]
The sequence $\tau_k/(y^2+\tau_k^2)$ is unimodal, and its maximum does not
exceed $1/(2y)$. Splitting the alternating sum near the maximum and applying
the alternating series test to both parts, we obtain $M(iy)=O(y^{-1})$. For
$\epsilon=1$, the same test applied to \eqref{def-M} gives
\[
|M(iy)|\le\frac2y+\frac{4y}{y^2+\tau_1^2}=O(y^{-1}).
\]
Thus
\begin{equation}\label{M-iy-decay}
M(iy)=O(y^{-1}),\quad y\to+\infty.
\end{equation}

The distribution $S_0$ is a finite linear combination of
$\delta_0,\delta_0',\ldots.$ If $S_0\ne0$, its contribution to the Laplace
representation of $M(iy)$ is a nonzero polynomial in $y$. The contribution
of $t_+^1P_d$ decays polynomially, while the contribution of $T_+$ decays
exponentially up to a polynomial factor. This contradicts
\eqref{M-iy-decay}. Hence $S_0=0$, and the first representation in
\eqref{M-local-spectrum} is proved.

The second representation follows from \eqref{M-parity} and the identity
$P_d(-t)=(-1)^\epsilon P_d(t)$.
\end{proof}

\subsection{The spectral gap for $\Theta_d$}
We use the formula
\[
\widehat{x^r}=2\pi i^r\delta_0^{(r)},
\quad r\in\mathbb Z_{\ge0},
\]
and the following lemma.

\begin{lem}[\cite{Go26}]\label{lem-truncated-power}
Let $r,L\in\mathbb Z_{\ge0}$. Then in $\mathcal S'(\mathbb R)$
\[
\partial_t^Lt_+^r=
\begin{cases}
\dfrac{r!}{(r-L)!}\,t_+^{r-L},&0\le L\le r,\\[3mm]
r!\,\delta_0^{(L-r-1)},&L\ge r+1.
\end{cases}
\]
\end{lem}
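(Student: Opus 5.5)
Lemma~\ref{lem-truncated-power} is a standard distributional computation, which I will prove by induction on $L$. For fixed $r$, the case $L=0$ is trivial. Each subsequent step rests on the derivative of a truncated power in $\mathcal S'(\mathbb R)$. Pairing with a test function $\psi\in\mathcal S(\mathbb R)$ and integrating by parts gives
\[
\langle\partial_t t_+^s,\psi\rangle=-\int_0^\infty t^s\psi'(t)\,dt
=\begin{cases} s\int_0^\infty t^{s-1}\psi(t)\,dt, & s\ge1,\\ \psi(0), & s=0.\end{cases}
\]
For $s\ge1$ the boundary term $t^s\psi(t)\big|_0^\infty$ vanishes; for $s=0$ it contributes $\psi(0)$. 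Hence $\partial_t t_+^s=s\,t_+^{s-1}$ for $s\ge1$, while $\partial_t t_+^0=\delta_0$ (the Heaviside derivative).

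I first handle $0\le L\le r$. Iterating the rule $\partial_t t_+^s=s\,t_+^{s-1}$ a total of $L$ times gives the falling factorial
\[
r(r-1)\cdots(r-L+1)=\frac{r!}{(r-L)!},
\]
and the truncated power that remains is $t_+^{r-L}$. At every step the exponent stays at least $1$ before differentiating, so no delta terms appear. In particular, taking $L=r$ yields $\partial_t^r t_+^r=r!\,t_+^0=r!\,\mathbf 1_{(0,\infty)}$.

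For $L\ge r+1$, one more derivative turns $r!\,\mathbf 1_{(0,\infty)}$ into $r!\,\delta_0$. The remaining $L-r-1$ derivatives act on $\delta_0$ and produce $r!\,\delta_0^{(L-r-1)}$, because distributional differentiation commutes with multiplication by constants.

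The only point needing care is the boundary term at $0$. It vanishes exactly when the exponent is positive, and this is what separates the two cases of the lemma. There is no real obstacle beyond this bookkeeping.
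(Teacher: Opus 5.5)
Your argument is correct: the one-step rule $\partial_t t_+^s=s\,t_+^{s-1}$ for $s\ge1$ together with $\partial_t t_+^0=\delta_0$, iterated $L$ times, gives both cases. The boundary term at the origin enters exactly at the step where the exponent reaches zero. The paper does not prove this lemma itself but quotes it from \cite{Go26}, and your induction via integration by parts is the standard proof of this fact.
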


Recall the definition of the polynomial $R_d$ given in \eqref{intro-Rd}:
\[
R_d(z)=\frac\epsilon2\,z^{2(N+\epsilon)} +2^{\epsilon-3}\sum_{n=0}^N(-1)^n
(2n+\epsilon+1)!\,p_nz^{2(N-n)} =\sum_{j=0}^{N+\epsilon}r_jz^{2j}.
\]
By \eqref{Pd-top},
\begin{equation}\label{Rd-zero}
R_d(0)=r_0=2^{\epsilon-3}(-1)^N d!\,p_N=a_d.
\end{equation}
Moreover, $R_d$ is even and
\[
\deg R_d\le d-1+\epsilon.
\]

Consider the function
\begin{equation}\label{def-Theta}
\Theta_d(z)=z^dM(z)-2\epsilon z^{d-1}+\frac{4R_d(z)}z.
\end{equation}
Its boundary values are
\[
\Theta_d^\pm(x)=x^dM^\pm(x)-2\epsilon x^{d-1}
+\frac{4R_d(x)}{x\pm i0}.
\]

We prove
\begin{equation}\label{Theta-gap}
\supp\widehat{\Theta_d^+}\subset[1,\infty),\quad
\supp\widehat{\Theta_d^-}\subset(-\infty,-1].
\end{equation}
By Lemma~\ref{lem-Cauchy},
\begin{equation}\label{x-d-M-spectrum}
\widehat{x^dM^+}
=c_\epsilon i^d\sum_{n=0}^Np_n
\partial_t^dt_+^{2n+\epsilon+1}+T_1,
\quad \supp T_1\subset[1,\infty).
\end{equation}
For $n=N$, we have $2N+\epsilon+1=d$. From \eqref{Pd-top},
\eqref{def-cepsilon}, and Lemma~\ref{lem-truncated-power}, we obtain the term
$8\pi ia_d\,\mathbf1_{(0,\infty)}$. It cancels with the Fourier transform of
$4a_d/(x+i0)$.

Let $0\le n<N$ and $r=d-2n-\epsilon-2$. The corresponding term in
\eqref{x-d-M-spectrum} is
\[
c_\epsilon i^d(2n+\epsilon+1)!\,p_n\delta_0^{(r)}.
\]
The monomial of degree $r$ in
\[
\frac{4R_d(x)}{x+i0}-2\epsilon x^{d-1}
\]
has coefficient $2^{\epsilon-1}(-1)^n(2n+\epsilon+1)!\,p_n$. Since
$c_\epsilon i^d=-2^\epsilon\pi(-1)^ni^r$, the Fourier transform of this
monomial is the negative of the preceding term. Thus all low-frequency terms cancel, and the
first inclusion in \eqref{Theta-gap} is proved. The second follows by complex
conjugation and reflection $t\mapsto-t$.

\subsection{The polynomial defect $H_d$}
We shall need the following lemma.

\begin{lem}[\cite{Go26}]\label{lem-multiplier}
Let $f\in\PW_1^\infty(\mathbb R)$ and $T\in\mathcal S'(\mathbb R)$. Then
$fT\in\mathcal S'(\mathbb R)$ and
\[
\widehat{fT}=\frac1{2\pi}\,\widehat f*\widehat T, \quad
\supp\widehat{fT}\subset\supp\widehat f+\supp\widehat T.
\]
\end{lem}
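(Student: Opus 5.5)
The plan is to reduce the lemma to the standard exchange formula of Schwartz's theory, after checking that both factors are in the right classes. First, $f$ is a \emph{multiplier} on $\mathcal S(\mathbb R)$. Since $f$ is bounded on $\mathbb R$ and is an entire function of exponential type at most $1$, Bernstein's inequality gives $\|f^{(k)}\|_{L^\infty(\mathbb R)}\le\|f\|_{L^\infty(\mathbb R)}$ for every $k\ge0$. Hence $f\in C^\infty(\mathbb R)$ and all its derivatives are bounded, so $f$ belongs to the class $\mathcal O_M$ of slowly increasing smooth functions. The map $\psi\mapsto f\psi$ is therefore continuous on $\mathcal S(\mathbb R)$. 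By duality, $T\mapsto fT$, defined by $\langle fT,\psi\rangle=\langle T,f\psi\rangle$, is a continuous map of $\mathcal S'(\mathbb R)$ into itself. In particular $fT\in\mathcal S'(\mathbb R)$.

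Second, the Fourier side. Since $f\in L^\infty\subset\mathcal S'$ and $f$ extends to an entire function of exponential type $\le1$ with polynomial growth on $\mathbb R$, the Paley--Wiener--Schwartz theorem gives $\supp\widehat f\subset[-1,1]$. Thus $\widehat f\in\mathcal E'(\mathbb R)$. The convolution $\widehat f*\widehat T$ of a compactly supported distribution with a tempered one is then a well-defined element of $\mathcal S'(\mathbb R)$. Moreover, the map $\widehat T\mapsto\widehat f*\widehat T$ is continuous on $\mathcal S'$, and the standard support inclusion for convolution with a compactly supported factor gives
\[
\supp(\widehat f*\widehat T)\subset\supp\widehat f+\supp\widehat T .
\]
Here the sum is closed because $\supp\widehat f$ is compact. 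This yields the support statement once the formula is established.

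Third, the identity $\widehat{fT}=\frac1{2\pi}\widehat f*\widehat T$. For $T=\psi\in\mathcal S$ this is the classical exchange formula for the product of an $\mathcal O_M$ function with a Schwartz function. With the normalization $\widehat f(t)=\int f(x)e^{-itx}\,dx$, one checks it by pairing both sides with a test function and using Fourier inversion $f=\frac1{2\pi}\langle\widehat f,e^{ix\Cdot}\rangle$. Both sides of the identity are continuous in $T$ for the weak-$*$ topology of $\mathcal S'$: the left side by the first step together with continuity of the Fourier transform, the right side by the second step. Since $\mathcal S$ is weak-$*$ sequentially dense in $\mathcal S'$, the identity extends to all $T\in\mathcal S'(\mathbb R)$.

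The main obstacle is the rigorous justification of this last step, namely the exchange formula in the generality $\mathcal O_M\times\mathcal S'$ together with the continuity of convolution by an $\mathcal E'$ distribution. Rather than reprove this, I would cite the Schwartz theorem that the Fourier transform maps $\mathcal O_M\cdot\mathcal S'$ onto $\mathcal O_C'*\mathcal S'$, noting that $\mathcal E'\subset\mathcal O_C'$ (Hörmander, Vol.~I, Ch.~7).
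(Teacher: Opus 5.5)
The paper gives no proof of this lemma (it is quoted from \cite{Go26}), so there is no in-paper argument to compare against. Your argument is correct and is the standard one. Bernstein's inequality makes $f$ a multiplier of $\mathcal S(\mathbb R)$. Paley--Wiener--Schwartz gives $\widehat f\in\mathcal E'$ with $\supp\widehat f\subset[-1,1]$. The exchange formula checked on $\mathcal S$ then extends by weak-$*$ sequential density.

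Two minor remarks.

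First, the Paley--Wiener--Schwartz theorem needs the bound $|f(x+iy)|\le\|f\|_\infty e^{|y|}$ in the whole plane, not only growth on $\mathbb R$. This follows from exponential type $\le1$ together with boundedness on $\mathbb R$ via Phragm\'en--Lindel\"of, and it is worth stating explicitly.

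Second, the step you single out as the main obstacle is already covered by facts you have. For $u=\widehat f\in\mathcal E'$ and $S\in\mathcal S'$ one has $\langle u*S,\psi\rangle=\langle S,\check u*\psi\rangle$, where $\check u=u(-\Cdot)$, and $\psi\mapsto\check u*\psi$ is continuous on $\mathcal S$. So $S\mapsto u*S$ is a transpose, hence weak-$*$ continuous, and your density argument closes without the $\mathcal O_M$/$\mathcal O_C'$ theory.

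If you prefer to skip density altogether, apply $\widehat{u*v}=\widehat u\,\widehat v$ (valid for $u\in\mathcal E'$, $v\in\mathcal S'$) to $u=\widehat f$, $v=\widehat T$. Then use $\widehat{\widehat g}=2\pi g(-\Cdot)$ and the injectivity of the Fourier transform on $\mathcal S'$ to read off $\widehat{fT}=\frac1{2\pi}\widehat f*\widehat T$ directly.
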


Introduce
\begin{equation}\label{def-Hd}
H_d(z)=\varphi(z)\Theta_d(z)-\frac{4a_d}{z}.
\end{equation}
The poles of $\Theta_d$ at $\pm\tau_k$ are canceled by the simple zeros of
$\varphi$. At the origin, \eqref{def-M} and \eqref{Rd-zero} give
\[
\Theta_d(z)=\frac{4a_d}{z}+O(z),\quad
\varphi(z)=1+O(z^2),
\]
so the singularity there is also removable. Hence $H_d$ is an entire odd
function.

Since $|x|^{d-1}\varphi(x)\in L^1(\mathbb R)$ and
$\|\varphi\|_{\infty}=1$, we have $\varphi\in\PW_1^1(\mathbb R)$ and
$\supp\widehat\varphi\subset[-1,1]$. Lemma~\ref{lem-multiplier},
\eqref{Theta-gap}, and \eqref{boundary-Fourier} imply
\[
\supp\widehat{H_d^+}\subset[0,\infty),\quad
\supp\widehat{H_d^-}\subset(-\infty,0].
\]
Both boundary values are restrictions of the same entire function, hence
\[
\supp\widehat H_d\subset\{0\}.
\]
Therefore $H_d$ is a polynomial. By oddness,
\[
H_d(z)=z\Pi_d(z)
\]
with an even polynomial $\Pi_d$.

\subsection{Vanishing of the defect}
We will use the Plancherel--P\'olya inequality in the following form.

\begin{lem}[\cite{Go26}]\label{lem-PP}
If $g\in\PW_1^1(\mathbb R)$ and the sequence
$(x_n)_{n\ge1}\subset\mathbb R$ is uniformly separated, then
\[
\sum_{n=1}^{\infty}|g'(x_n)|\le C\|g\|_1.
\]
\end{lem}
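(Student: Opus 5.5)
The plan is to reduce the estimate for $g'$ to an estimate for values of a function in the same class, and then prove the latter by the standard subharmonicity argument. Throughout, let $\delta>0$ be a separation constant, so $|x_n-x_m|\ge\delta$ for $n\ne m$.

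\emph{Step 1: reduction to values of a function in the class.} By Bernstein's inequality in $L^1$ for entire functions of exponential type $1$, we have $g'\in\PW_1^1(\mathbb R)$ and $\|g'\|_1\le\|g\|_1$. So it suffices to prove
\[
\sum_{n\ge1}|h(x_n)|\le C\|h\|_1,\qquad h\in\PW_1^1(\mathbb R),
\]
and then apply it to $h=g'$.

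\emph{Step 2: integrability along horizontal lines.} For $h\in\PW_1^1(\mathbb R)$ I will show
\[
\int_{\mathbb R}|h(x+iy)|\,dx\le e^{|y|}\|h\|_1,\qquad y\in\mathbb R.
\]
The first route I would try is the Paley--Wiener representation $h(z)=\frac1{2\pi}\int_{-1}^1\widehat h(t)e^{izt}\,dt$, which makes the shifts $h(\cdot+iy)$ entire functions of type $1$. For the $L^1$ bound I would use the classical Phragmén--Lindelöf argument for the subharmonic function $\log|h|$. Alternatively, I would write $h(x+iy)$ as the convolution of $h$ with an $L^1$ kernel whose Fourier transform equals $e^{-yt}$ on $[-1,1]$. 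Such a kernel has norm at most $e^{|y|}$ up to a harmless constant, and any constant of the form $Ce^{|y|}$ suffices here. This step is the only one that is not elementary bookkeeping, so I expect it to be the main obstacle. However, it is a standard fact (see, e.g., Plancherel--Pólya or Nikolskii's book) and could be quoted.

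\emph{Step 3: mean-value estimate and summation.} Since $h$ is entire, $|h|$ is subharmonic, so with $\rho=\delta/2$ and $D_n=\{|z-x_n|<\rho\}$,
\[
|h(x_n)|\le\frac1{\pi\rho^2}\iint_{D_n}|h(x+iy)|\,dx\,dy.
\]
The discs $D_n$ are pairwise disjoint and all lie in the strip $|y|<\rho$. Summing over $n$ and using Step 2 gives
\[
\sum_n|h(x_n)|\le\frac1{\pi\rho^2}\int_{-\rho}^{\rho}\int_{\mathbb R}|h(x+iy)|\,dx\,dy
\le\frac{2e^{\rho}}{\pi\rho}\,\|h\|_1.
\]
Combining this with Step 1 yields the lemma with $C=4e^{\delta/2}/(\pi\delta)$, which depends only on the separation constant $\delta$.
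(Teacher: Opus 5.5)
Your proof is correct. The paper gives no proof of this lemma to compare with: it quotes it from \cite{Go26}, and it is the classical Plancherel--P\'olya inequality. Your argument is the standard proof of that inequality:
\begin{itemize}
\item Bernstein's inequality $\|g'\|_1\le\|g\|_1$ reduces the claim to an element $h$ of $\PW_1^1(\mathbb R)$;
\item the horizontal-line bound $\int_{\mathbb R}|h(x+iy)|\,dx\le e^{|y|}\|h\|_1$ controls $h$ in a strip;
\item the area sub-mean-value inequality for $|h|$ on the pairwise disjoint discs of radius $\delta/2$ is summed via Tonelli over the strip $|y|<\delta/2$.
\end{itemize}
The constant $4e^{\delta/2}/(\pi\delta)$ is right. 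Making explicit that $C$ depends only on the separation constant is exactly what the paper needs when it applies the lemma to $z^{d-1}\varphi(z)$ along a separated subsequence of $(\tau_k)$.

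One remark on Step~2, the kernel route. The claim that there are kernels with $\|K_y\|_1\le Ce^{|y|}$ is true, but it does not come from a naive construction. Take $\widehat{K_y}(t)=e^{-yt}\chi(t)$ with $\chi\in C_c^\infty$ and $\chi=1$ on $[-1,1]$. Then $\|K_y\|_1\ge\sup_t e^{-yt}|\chi(t)|$, and $\chi$ is nonzero somewhat beyond $[-1,1]$, so this grows faster than $Ce^{|y|}$. This does not hurt you, because you only use $|y|\le\delta/2$, where such kernels have uniformly bounded $L^1$ norm.

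If you want the sharp bound, the cleanest argument is duality. Take $\phi$ compactly supported with $\|\phi\|_\infty\le1$ and set $F(z)=\int_{\mathbb R}h(z+t)\phi(t)\,dt$. Then $F$ is entire of type at most $1$ with $|F|\le\|h\|_1$ on $\mathbb R$, so Phragm\'en--Lindel\"of gives $|F(iy)|\le e^{|y|}\|h\|_1$. Taking the supremum over $\phi$ yields $\int_{\mathbb R}|h(x+iy)|\,dx\le e^{|y|}\|h\|_1$.
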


Letting $z=\tau_k$ in \eqref{def-Hd} and using
\eqref{M-residues}, we obtain
\begin{equation}\label{Pi-at-zero}
\Pi_d(\tau_k)=2(-1)^k\tau_k^{d-1}\varphi'(\tau_k)
-\frac{4a_d}{\tau_k^2}.
\end{equation}
The function $g(z)=z^{d-1}\varphi(z)$ belongs to
$\PW_1^1(\mathbb R)$ and
$g'(\tau_k)=\tau_k^{d-1}\varphi'(\tau_k)$. From the sequence $(\tau_k)$ one
can choose a uniformly separated subsequence $(\tau_{k_j})_{j\ge1}$. By
Lemma~\ref{lem-PP},
\[
\sum_{j\ge 1}|g'(\tau_{k_j})|<\infty.
\]
If $\Pi_d\not\equiv0$, then
$|\Pi_d(x)+4a_d/x^2|$ is bounded away from zero for all sufficiently large~$x$. It would then follow from \eqref{Pi-at-zero} that
$|g'(\tau_{k_j})|$ is bounded away from zero, which is impossible. Hence
$\Pi_d\equiv0$, and
\begin{equation}\label{phi-Theta}
\varphi(z)\Theta_d(z)=\frac{4a_d}{z}.
\end{equation}

\subsection{The final functional equation}
From \eqref{M-logder}, we have
\[
\varphi M=\frac{2\epsilon\varphi}{z}
-2\bigl(\Phi_1'\Phi_2-\Phi_1\Phi_2'\bigr).
\]
Substituting this identity and \eqref{def-Theta} into \eqref{phi-Theta},
multiplying by $z$, and canceling the terms
$2\epsilon z^d\varphi$, we obtain
\[
-2z^{d+1}\bigl(\Phi_1'\Phi_2-\Phi_1\Phi_2'\bigr)
+4R_d\Phi_1\Phi_2=4a_d.
\]
Dividing by $-2$ gives \eqref{intro-FE}.

\section{Proof of Theorem \ref{thm-main}~\textup{(b)}}
\label{sec-ode}

\subsection{Relations at the zeros of $\Phi_1$ and $\Phi_2$}
Differentiate \eqref{intro-FE}. If $s$ is a zero of $\Phi_1$, then
$\Phi_2(s)\ne0$, and after cancellation we obtain
\begin{equation}\label{zero-relation-1}
s^{d+1}\Phi_1''(s)
+\bigl((d+1)s^d-2R_d(s)\bigr)\Phi_1'(s)=0.
\end{equation}
If $s$ is a zero of $\Phi_2$, then similarly
\begin{equation}\label{zero-relation-2}
s^{d+1}\Phi_2''(s)
+\bigl((d+1)s^d+2R_d(s)\bigr)\Phi_2'(s)=0.
\end{equation}

We use the following lemma on the logarithmic derivative.

\begin{lem}[\cite{Go26}]\label{lem-logder}
Let $f$ be an entire function of finite order and let $U_1$, $U_2$ be
polynomials.~If
\[
G=U_2\,\frac{f''}{f}+U_1\frac{f'}{f}
\]
has no poles, then it is a polynomial.
\end{lem}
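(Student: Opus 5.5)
The plan is to use Nevanlinna theory, namely the lemma on the logarithmic derivative for functions of finite order. We may assume $f\not\equiv0$. By hypothesis $G$ has no poles, so it is entire and its Nevanlinna characteristic reduces to the proximity function: $N(r,G)=0$ and $T(r,G)=m(r,G)$. The goal is to show $T(r,G)=O(\log r)$ as $r\to\infty$, because an entire function with this growth of the characteristic is a polynomial.

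First, I bound $m(r,G)$ using the subadditivity of $\log^+$ for sums and products. This gives
\[
m(r,G)\le m(r,U_2)+m\Bigl(r,\frac{f''}{f}\Bigr)+m(r,U_1)+m\Bigl(r,\frac{f'}{f}\Bigr)+\log2 .
\]
For the polynomials, $m(r,U_j)\le \deg U_j\,\log r+O(1)$.

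Second, I use that $f$ has finite order, so the logarithmic derivative lemma gives $m(r,f'/f)=O(\log r)$ with no exceptional set. For the second derivative, I write $f''/f=(f''/f')(f'/f)$. Since $f'$ is also of finite order (the same order as $f$), applying the lemma to $f'$ gives $m(r,f''/f')=O(\log r)$, unless $f'\equiv0$, in which case $G\equiv0$ trivially. Together these give $m(r,f''/f)=O(\log r)$. Alternatively, one can quote directly the standard estimate $m(r,f^{(k)}/f)=O(\log r)$ for finite-order $f$. Combining the bounds yields $T(r,G)=O(\log r)$.

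Finally, I pass from the characteristic to the maximum modulus using the standard inequality $\log M(r,G)\le 3\,T(2r,G)$ for entire $G$. This gives $M(r,G)\le C r^{K}$ for large $r$. Cauchy's estimates for the Taylor coefficients of $G$ then show that all coefficients of index greater than $K$ vanish, so $G$ is a polynomial.

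The only substantive ingredient is the logarithmic derivative lemma in its finite-order form, which removes the exceptional set. If one prefers pointwise estimates instead, Gundersen's bounds $|f^{(k)}(z)/f(z)|\le |z|^{C}$ hold outside a set of radii of finite logarithmic measure. In that version the main obstacle is the exceptional set. I would handle it with the maximum principle: for each large $r$, choose a non-exceptional $r'\in[r,2r]$, which exists because the exceptional set has finite logarithmic measure, and then $M(r,G)\le M(r',G)\le(2r)^C$.
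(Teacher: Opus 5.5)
Your proof is correct. Since $G$ is entire, $T(r,G)=m(r,G)$. The finite-order logarithmic derivative lemma, applied to $f$ and to $f'$, gives $T(r,G)=O(\log r)$, and then $\log M(r,G)\le 3T(2r,G)$ together with Cauchy's estimates shows that $G$ is a polynomial.

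The paper does not prove this lemma; it imports it from \cite{Go26}. Your Nevanlinna argument is the standard one for such a statement. In your alternative Gundersen version, note that those pointwise bounds are stated for transcendental $f$. For polynomial $f$, however, $G$ is rational and entire, hence a polynomial immediately.
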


\subsection{The polynomials $\mathcal P_1$, $\mathcal P_2$, and $\mathcal K_d$}
Consider the expressions \eqref{def-P1}, \eqref{def-P2}:
\begin{align*}
\mathcal P_1(z)&=
-\frac{z^{d+1}\Phi_1''+\bigl((d+1)z^d-2R_d\bigr)\Phi_1'}{\Phi_1}, \\
\mathcal P_2(z)&=
-\frac{z^{d+1}\Phi_2''+\bigl((d+1)z^d+2R_d\bigr)\Phi_2'}{\Phi_2}.
\end{align*}
By \eqref{zero-relation-1}, \eqref{zero-relation-2}, all possible poles are
removable. Lemma~\ref{lem-logder} shows that $\mathcal P_1$ and
$\mathcal P_2$ are polynomials with real coefficients.

Differentiating \eqref{intro-FE} and substituting the definitions
\eqref{def-P1}, \eqref{def-P2}, we obtain
\[
(\mathcal P_1-\mathcal P_2+2R_d')\Phi_1\Phi_2=0.
\]
Hence $\mathcal P_2=\mathcal P_1+2R_d'$. For the moment, introduce the
polynomial
\begin{equation}\label{def-calKd}
\mathcal K_d=\frac{\mathcal P_1+\mathcal P_2}{2}.
\end{equation}
We will show below that $\mathcal K_d$ coincides with the polynomial $K_d$
defined in \eqref{intro-Kd}. Since
\[
\mathcal P_1=\mathcal K_d-R_d',\quad
\mathcal P_2=\mathcal K_d+R_d',
\]
the functions $\Phi_1$, $\Phi_2$ satisfy the system
\begin{equation}\label{calK-ODE}
\begin{cases}
z^{d+1}\Phi_1''+\bigl((d+1)z^d-2R_d\bigr)\Phi_1'
+\bigl(\mathcal K_d-R_d'\bigr)\Phi_1=0,\\
z^{d+1}\Phi_2''+\bigl((d+1)z^d+2R_d\bigr)\Phi_2'
+\bigl(\mathcal K_d+R_d'\bigr)\Phi_2=0.
\end{cases}
\end{equation}

\subsection{Differential equation for $\tilde\Phi_j$}
Direct substitution into \eqref{calK-ODE} and the identity
$h_d'=R_d/z^{d+1}$ show that the functions $\tilde\Phi_j$ satisfy
\begin{equation}\label{calK-common-ODE}
y''+\frac{d+1}{z}\,y'
+\Bigl(\frac{\mathcal K_d(z)}{z^{d+1}}
-\frac{R_d(z)^2}{z^{2d+2}}\Bigr)y=0.
\end{equation}
It follows from \eqref{intro-FE} that
\begin{equation}\label{tilde-Wronskian}
W(\tilde\Phi_1,\tilde\Phi_2)
=\frac{2a_d}{z^{d+1}}.
\end{equation}
In particular, the functions $\tilde\Phi_j$ form a fundamental system of
solutions of \eqref{calK-common-ODE}.

From the explicit formula \eqref{intro-hd}, on the positive half-line we have
\begin{equation}\label{hd-infinity}
h_d(x)=\frac\epsilon2\log x+O(x^{-1-\epsilon}),\quad
h_d'(x)=\frac\epsilon{2x}+O(x^{-2-\epsilon}),
\quad x\to+\infty.
\end{equation}

\subsection{Liouville transformation and zero density}
We shall need the following two statements.

\begin{lem}[\cite{Go26}]\label{lem-Liouville}
Let $p\in C^1[A,B]$, $q\in C[A,B]$, and let $y\in C^2[A,B]$ satisfy
\[
y''+py'+qy=0. 
\]
Then
\[
u(x)=\exp\Bigl(\frac12\int_A^xp\Bigr)y(x)
\]
satisfies
\[
u''+Vu=0,\quad V=q-\frac{p'}2-\frac{p^2}{4}.
\]
\end{lem}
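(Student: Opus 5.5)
This is the classical Liouville reduction of a second-order linear equation to normal form, and I will prove it by direct differentiation.

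Put
\[
E(x)=\exp\Bigl(\frac12\int_A^xp\Bigr).
\]
Since $p\in C^1[A,B]$, the function $E$ is $C^2$, $E>0$, and $E'=\frac p2E$. Differentiating once more gives
\[
E''=\Bigl(\frac{p'}2+\frac{p^2}4\Bigr)E.
\]
So $u=Ey$ belongs to $C^2[A,B]$, and I will compute
\[
u'=E\Bigl(y'+\frac p2y\Bigr),\qquad
u''=E\Bigl(y''+py'+\Bigl(\frac{p'}2+\frac{p^2}4\Bigr)y\Bigr).
\]

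Next I will use the equation $y''+py'=-qy$ to replace the first two terms in the bracket. This gives
\[
u''=E\Bigl(-q+\frac{p'}2+\frac{p^2}4\Bigr)y=-\Bigl(q-\frac{p'}2-\frac{p^2}4\Bigr)u=-Vu,
\]
which is exactly $u''+Vu=0$ with $V=q-\frac{p'}2-\frac{p^2}4$.

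There is no real obstacle in this argument. The only point needing care is the regularity bookkeeping: $p\in C^1$ is what makes $p'$ meaningful and $u''$ continuous, and $q\in C$ makes $V$ continuous. If complex-valued $y$ is needed, the same computation goes through verbatim.
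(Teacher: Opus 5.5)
Your direct computation is correct: $E''=\bigl(\tfrac{p'}2+\tfrac{p^2}4\bigr)E$ and $u''=E\bigl(y''+py'+(\tfrac{p'}2+\tfrac{p^2}4)y\bigr)$, so substituting $y''+py'=-qy$ gives $u''=-Vu$, and $p\in C^1$ ensures $u\in C^2$. The paper does not reprove this lemma (it is quoted from \cite{Go26}), and your differentiation is the standard verification of the Liouville normal form.
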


\begin{lem}[\cite{Go26}]\label{lem-zero-count}
Let $V\in C[A,B]$ be real-valued, let $u\in C^2[A,B]\setminus\{0\}$, and
assume that 
\[
u''+Vu=0\quad \text{on $[A,B]$}.
\]

\textup{(a)} If $V\le\Omega^2$ on $[A,B]$, where $\Omega\ge0$, then the number of
distinct zeros of $u$ on $[A,B]$ does not exceed
\[
\frac{\Omega}{\pi}\,(B-A)+1.
\]

\textup{(b)} If $V\ge\omega^2$ on $[A,B]$, where $\omega\ge0$, then the number of
distinct zeros of $u$ on $[A,B]$ is at least
\[
\frac{\omega}{\pi}\,(B-A)-1.
\]
\end{lem}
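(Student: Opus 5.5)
The statement is a quantitative form of the Sturm comparison theorem. I would prove it by comparing $u$ with the explicit solutions $w(x)=\sin(\Omega(x-c))$ (respectively $\sin(\omega(x-c))$) of the constant-coefficient equation. The tool throughout is the Wronskian-type identity
\[
\bigl(u'w-uw'\bigr)'=u''w-uw''=(\Omega^2-V)\,uw ,
\]
valid for $w''+\Omega^2w=0$, and its analogue with $\omega$.

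First I would record two preliminary facts. Since $u\not\equiv0$ solves a linear second-order equation with continuous coefficients, every zero of $u$ is simple: $u(x_0)=u'(x_0)=0$ would force $u\equiv0$ by uniqueness. Zeros are also isolated, so by compactness $u$ has finitely many zeros $z_1<\dots<z_m$ on $[A,B]$.

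For (a), the case $\Omega=0$ is handled separately. If $V\le0$ and $u$ had two consecutive zeros $a<b$ with, say, $u>0$ on $(a,b)$, then $u''=-Vu\ge0$ there. So $u$ would be convex on $[a,b]$ with zero endpoints, hence $u\le0$, a contradiction. Thus $u$ has at most one zero.

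For $\Omega>0$, the plan is to show $z_{i+1}-z_i\ge\pi/\Omega$; summing then gives $(m-1)\pi/\Omega\le B-A$, which is the claim. Suppose instead $a=z_i$, $b=z_{i+1}$ with $b-a<\pi/\Omega$, and without loss of generality $u>0$ on $(a,b)$. Take $w=\sin(\Omega(x-a))$, which is positive on $(a,b]$. Integrating the identity over $[a,b]$, the right side is $\ge0$. The left side equals $u'(b)w(b)$, because $u(a)=u(b)=w(a)=0$. Simplicity of the zero gives $u'(b)<0$, so $u'(b)w(b)<0$, a contradiction.

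For (b), the case $\omega=0$ is trivial, since the required lower bound is then negative. For $\omega>0$, I would show that no open subinterval $(c,c+L)\subset[A,B]$ with $L>\pi/\omega$ is free of zeros of $u$. Given this, set $z_0=A$ and $z_{m+1}=B$. Then the $m+1$ gaps $z_{i+1}-z_i$ are each $\le\pi/\omega$, so $B-A\le(m+1)\pi/\omega$, i.e.\ $m\ge\omega(B-A)/\pi-1$. (If $m=0$, the same argument gives $B-A\le\pi/\omega$.)

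To prove the interval claim, suppose $u>0$ on $(c,c+L)$ with $L>\pi/\omega$. Shrink to $[c,c+\pi/\omega]$, on which $u\ge0$ and is positive in the interior. Take $w=\sin(\omega(x-c))$, positive inside and vanishing at both ends, with $w'(c)=\omega$ and $w'(c+\pi/\omega)=-\omega$. Integrating the identity gives
\[
\omega\bigl(u(c)+u(c+\pi/\omega)\bigr)=\int(\omega^2-V)uw\le0 .
\]
Hence $u(c+\pi/\omega)=0$. But $c+\pi/\omega$ is an interior point of $(c,c+L)$, where $u>0$, which is a contradiction.

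The only delicate points are the boundary and equality cases: zeros at endpoints, and the possibility $V\equiv\omega^2$ in which $u$ is a multiple of $w$. The device of passing to a strictly longer zero-free interval and shrinking it is exactly what avoids them, so I expect that careful endpoint bookkeeping to be the main (minor) obstacle.
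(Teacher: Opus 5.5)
Your proof is correct and complete. It is the standard Sturm comparison argument: you use the identity $(u'w-uw')'=(\Omega^2-V)uw$ with $w=\sin(\Omega(x-c))$, which is the natural route here. The paper itself gives no proof and simply quotes the lemma from its companion work.

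Taking a zero-free interval of length strictly greater than $\pi/\omega$ and then shrinking it correctly handles the equality case $V\equiv\omega^2$.

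The only tacit hypothesis you use is that $u$ is real-valued. This is needed for part (b), since $u=e^{i\omega x}$ has no zeros. It is implicit in the statement and holds in the paper's application to $u_1$ on the positive half-line.
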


Apply Lemma~\ref{lem-Liouville} to \eqref{calK-common-ODE} on the positive
half-line and put
\begin{equation}\label{def-u}
u_j(x)=x^{(d+1)/2}\tilde\Phi_j(x),\quad j=1,2.
\end{equation}
Then
\begin{equation}\label{Schrodinger}
u_j''+\mathcal V_d(x)u_j=0,
\end{equation}
where
\[
\mathcal V_d(x)=\frac{\mathcal K_d(x)}{x^{d+1}}
-\frac{R_d(x)^2}{x^{2d+2}}
-\frac{d^2-1}{4x^2}.
\]
Since $\deg R_d\le d-1+\epsilon\le d$, we have
\begin{equation}\label{calV-asymp-K}
\mathcal V_d(x)=\frac{\mathcal K_d(x)}{x^{d+1}}+O(x^{-2}),
\quad x\to+\infty.
\end{equation}

By \eqref{def-Phi12-tilde} and \eqref{def-u},
\[
u_1(x)=x^{(d+1)/2}e^{-h_d(x)}\Phi_1(x),
\]
so the positive zeros of $u_1$ coincide with the positive zeros of $\Phi_1$,
namely $\tau_1,\tau_3,\tau_5,\ldots.$ If $n_{u_1}(X)$ denotes the number of
these zeros in $(0,X]$, then $\tau_k\sim\pi k$ gives
\begin{equation}\label{u-zero-density}
n_{u_1}(X)=\frac{X}{2\pi}+o(X),\quad X\to\infty.
\end{equation}

\subsection{Degree of the polynomial $\mathcal K_d$}
If $\mathcal K_d\equiv0$ or $\deg\mathcal K_d\le d$, then by
\eqref{calV-asymp-K},
\[
\mathcal V_d(x)\to0.
\]
For every $\delta>0$ and all sufficiently large $x$, we have
$\mathcal V_d(x)\le\delta$. Lemma~\ref{lem-zero-count}~\textup{(a)} gives
\[
\limsup_{X\to\infty}\frac{n_{u_1}(X)}X
\le\frac{\sqrt\delta}{\pi},
\]
which after $\delta\downarrow0$ contradicts \eqref{u-zero-density}. Hence
\[
\deg\mathcal K_d\ge d+1.
\]

Let $s=\deg\mathcal K_d$, let $c\ne0$ be the leading coefficient of
$\mathcal K_d$, and let $\kappa=s-d-1\ge0$. Then
\[
\mathcal V_d(x)=cx^\kappa(1+o(1)).
\]
If $c<0$, then $\mathcal V_d(x)<0$ for all sufficiently large $x$.
Lemma~\ref{lem-zero-count}~\textup{(a)}, applied with $\Omega=0$, then
implies that $u_1$ has at most one zero on a sufficiently large half-line,
contradicting~\eqref{u-zero-density}. Therefore $c>0$.

If $\kappa>0$, then on $[X,2X]$ for large $X$,
\[
\mathcal V_d(x)\ge\frac{c}{2}\,X^\kappa.
\]
By Lemma~\ref{lem-zero-count}~\textup{(b)},
\[
n_{u_1}(2X)-n_{u_1}(X) \ge\frac1\pi\,\sqrt{\frac{c}{2}}\,X^{1+\kappa/2}-2,
\]
which contradicts the linear asymptotic relation
\eqref{u-zero-density}. Hence
\begin{equation}\label{degree-exact}
\deg\mathcal K_d=d+1.
\end{equation}

\subsection{Leading coefficient of $\mathcal K_d$}
It remains to determine the leading coefficient $c$. By
\eqref{calV-asymp-K}, \eqref{degree-exact},
\[
\mathcal V_d(x)=c+o(1).
\]
For every $0<\delta<c$ and all sufficiently large $x$,
\[
c-\delta\le\mathcal V_d(x)\le c+\delta.
\]
Lemma~\ref{lem-zero-count} and \eqref{u-zero-density}, followed by
$\delta\downarrow0$, give
\[
\frac{\sqrt c}{\pi}=\frac1{2\pi}.
\]
Hence
\begin{equation}\label{calKd-leading}
c=\frac14.
\end{equation}

\subsection{The identity $\mathcal K_d=K_d$}
Put
\[
L_j(z)=\frac{\Phi_j'(z)}{\Phi_j(z)},\quad j=1,2.
\]
From \eqref{def-P1}, \eqref{def-P2}, \eqref{def-calKd}, we obtain
\begin{equation}\label{calK-via-L12}
\mathcal K_d= -\frac{z^{d+1}}2\,\bigl(L_1'+L_1^2+L_2'+L_2^2\bigr)
-\frac{d+1}{2}\,z^d(L_1+L_2) +R_d(L_1-L_2).
\end{equation}
Since
\[
L_1+L_2=\frac{\varphi'}{\varphi}=O(z),\quad z\to0,
\]
the first two terms on the right-hand side of \eqref{calK-via-L12} are
$O(z^{d+1})$. Hence
\begin{equation}\label{calK-low}
\mathcal K_d(z)=R_d(z)\bigl(L_1(z)-L_2(z)\bigr)+O(z^{d+1}).
\end{equation}

On the other hand, by \eqref{M-logder},
\[
L_1-L_2=\frac{\epsilon}{z}-\frac{M}{2}.
\]
Near the origin,
\[
\frac1{z^2-\tau_k^2}
=-\sum_{q=0}^\infty\frac{z^{2q}}{\tau_k^{2q+2}},
\]
so \eqref{def-M} gives
\[
M(z)=\frac{2\epsilon}{z}
-4\sum_{q=0}^\infty\lambda_qz^{2q+\epsilon},
\]
where, recall (see \eqref{def-lambda}),
\[
\lambda_q=\sum_{k=1}^\infty
\frac{(-1)^k}{\tau_k^{2q+\epsilon+1}}.
\]
Therefore
\begin{equation}\label{L1-L2-lambda}
L_1(z)-L_2(z)
=2\sum_{q=0}^{\infty}\lambda_qz^{2q+\epsilon}.
\end{equation}
Since
\[
R_d(z)=\sum_{j=0}^{N+\epsilon}r_jz^{2j},
\]
it follows from \eqref{calK-low}, \eqref{L1-L2-lambda} that for
$0\le n\le N$ the coefficient of $z^{2n+\epsilon}$ in the polynomial~$\mathcal K_d$~is
\begin{equation}\label{calK-low-coeff}
2\sum_{j=0}^nr_j\lambda_{n-j}.
\end{equation}

From \eqref{calK-low-coeff}, \eqref{degree-exact}, and
\eqref{calKd-leading}, we obtain
\[
\mathcal K_d(z)=\frac14\,z^{d+1} +2\sum_{n=0}^N
\biggl(\sum_{j=0}^nr_j\lambda_{n-j}\biggr)z^{2n+\epsilon}.
\]
By definition \eqref{intro-Kd}, the right-hand side is $K_d(z)$. Hence
$\mathcal K_d=K_d$. Therefore the system \eqref{calK-ODE} coincides with
\eqref{intro-ODE}, equation \eqref{calK-common-ODE} coincides with
\eqref{intro-common-ODE}, and $\mathcal V_d=V_d$. Together with
\eqref{tilde-Wronskian}, this completes the proof of part~\textup{(b)}.

\section{Proof of the corollaries}\label{sec-corollaries}

\subsection{Proof of Corollary~\ref{cor-liouville}}
The corollary has essentially already been proved above in
Section~\ref{sec-ode}. After establishing the identities
$\mathcal K_d=K_d$, $\mathcal V_d=V_d$, we obtain equation~\eqref{Schrodinger} with the potential \eqref{intro-V}. Moreover, from
\eqref{tilde-Wronskian},
\[
W(u_1,u_2) =x^{d+1}W(\tilde\Phi_1,\tilde\Phi_2) =2a_d.
\]
In particular, $u_1$ and $u_2$ are linearly independent.

\subsection{Proof of Corollary~\ref{cor-Phi-growth}}
From the explicit formulas for $K_d$ and $R_d$, we have
\[
K_d(x)=\frac14\,x^{d+1}+O(x^{d-1}),\quad
R_d(x)=O(x^{d-1+\epsilon}).
\]
By \eqref{intro-V},
\begin{equation}\label{V-quarter}
V_d(x)=\frac14+O(x^{-2}),\quad x\to+\infty.
\end{equation}
For a real solution of \eqref{intro-Liouville-ODE}, introduce the energy
function
\[
\mathcal E(x)=u'(x)^2+\frac14\,u(x)^2.
\]
Then
\[
\mathcal E'(x)
=-2\Bigl(V_d(x)-\frac14\Bigr)u(x)u'(x),
\]
and by \eqref{V-quarter},
\[
|\mathcal E'(x)|\le Cx^{-2}\mathcal E(x)
\]
for all sufficiently large $x$. Hence $\mathcal E(x)=O(1)$ and
\[
u_j(x)=O(1),\quad x\to+\infty,\quad j=1,2.
\]
By the definition of $u_j$,
\[
\tilde\Phi_j(x)=O\bigl(x^{-(d+1)/2}\bigr),\quad j=1,2.
\]
Using \eqref{def-Phi12-tilde}, \eqref{hd-infinity}, we obtain
\[
\Phi_1(x)=O\bigl(x^{-(d+1-\epsilon)/2}\bigr),\quad
\Phi_2(x)=O\bigl(x^{-(d+1+\epsilon)/2}\bigr),
\quad x\to+\infty.
\]
For odd $d$, we have $\Phi_1(-x)=\Phi_2(x)$, while for even $d$ both functions
$\Phi_j$ are even. Hence the same estimates hold as $x\to-\infty$, and
\eqref{intro-decay} is proved.

By Lemma~\ref{lem-Phi12}, the functions $\Phi_j$ have finite exponential type,
while \eqref{intro-decay} shows that their restrictions to $\mathbb R$ are
tempered distributions. Therefore, by the Paley--Wiener--Schwartz theorem,
the Fourier transforms $\widehat{\Phi_j}$ have compact support.

Apply the Fourier transform to equations \eqref{intro-ODE}. Since the leading
term of $K_d$ is $z^{d+1}/4$, the terms
$z^{d+1}\Phi_j''$ and $K_d\Phi_j$ give the coefficient, up to a nonzero
constant factor,
$1/4-t^2$
of the highest derivative $\partial_t^{d+1}\widehat{\Phi_j}$. Therefore, on the intervals
$(-\infty,-1/2)$ and $(1/2,\infty)$, the distribution
$\widehat{\Phi_j}$ is a smooth solution of a regular linear differential
equation. Since it has compact support, uniqueness gives
\[
\supp\widehat{\Phi_j}\subset[-1/2,1/2],\quad j=1,2,
\]
and the exponential type of each $\Phi_j$ is at most $1/2$.

Since $\varphi=\Phi_1\Phi_2$ has exact exponential type~$1$, while the type of
a product does not exceed the sum of the types of its factors, both
functions $\Phi_j$ have exact exponential type $1/2$.

\subsection{Proof of Corollary~\ref{cor-phi-ODE}}
By Theorem~\ref{thm-main}~(b), the functions $\tilde\Phi_1$,
$\tilde\Phi_2$ satisfy, for $z\ne0$, the equation
\[
y''+\frac{d+1}{z}\,y'+q_dy=0,
\]
and $\tilde\Phi_1\tilde\Phi_2=\varphi$. It is easy to check that if
$y_1$, $y_2$ satisfy
\[
y''+py'+qy=0,
\]
then their product $u=y_1y_2$ satisfies
\[
u'''+3pu''+(p'+4q+2p^2)u'
+(2q'+4pq)u=0.
\]
It remains to put $p=(d+1)/z$ and $q=q_d$.

\section{The case $d=2$}\label{sec-d2}

As a new example, consider $d=2$. Then $\epsilon=1$, $N=0$,
\[
v_{2}=\pi,\quad a_2=\frac1{2\pi\mathcal C_2},\quad P_2(t)=2a_2t,
\]
\[
R_2(z)=\frac12\,z^2+a_2,\quad
\lambda_0=\sum_{k=1}^\infty\frac{(-1)^k}{\tau_k^2}<0,
\]
\[
K_2(z)=\frac14\,z^3+\beta z,\quad
\beta=2a_2\lambda_0<0.
\]

For the numerical determination of the parameters $a_2$ and $\beta$, it is
convenient to use the system~\eqref{intro-ODE}, which in this case becomes
\begin{equation}\label{d2-ODE}
\begin{cases}
z^3\Phi_1''+(2z^2-2a_2)\Phi_1'
+\Bigl(\dfrac14\,z^3+(\beta-1)z\Bigr)\Phi_1=0,\\[2mm]
z^3\Phi_2''+(4z^2+2a_2)\Phi_2'
+\Bigl(\dfrac14\,z^3+(\beta+1)z\Bigr)\Phi_2=0.
\end{cases}
\end{equation}

Since $\Phi_j$ are even, write
\[
\Phi_j(z)=\sum_{n=0}^\infty c_{j,n}z^{2n},\quad
c_{j,0}=1,\quad j=1,2.
\]
Substitution into \eqref{d2-ODE} gives the three-term recurrences
\begin{equation}\label{d2-rec}
\begin{cases}
4a_2(n+1)c_{1,n+1}
=\bigl(2n(2n+1)+\beta-1\bigr)c_{1,n}
+\dfrac14\,c_{1,n-1},\\[2mm]
4a_2(n+1)c_{2,n+1}
=-\bigl(2n(2n+3)+\beta+1\bigr)c_{2,n}
-\dfrac14\,c_{2,n-1},
\end{cases}
\end{equation}
where $c_{1,-1}=c_{2,-1}=0$.

For fixed $a_2$ and $\beta$, each recurrence in \eqref{d2-rec} has two
asymptotically different solutions. For the dominant solutions,
\[
\frac{c_{1,n+1}}{c_{1,n}}\sim\frac{n}{a_2},\quad
\frac{c_{2,n+1}}{c_{2,n}}\sim-\frac{n}{a_2},
\]
so the corresponding power series do not define entire functions. For the
other solutions,
\begin{equation}\label{d2-minimal-asymp}
\frac{c_{j,n}}{c_{j,n-1}}\sim-\frac1{16n^2},
\quad j=1,2.
\end{equation}
These are minimal solutions: the ratio of a minimal solution to any linearly
independent solution of the same recurrence tends to zero. Since the
functions~$\Phi_j$ are entire, their coefficient sequences correspond
precisely to the minimal solutions.

The asymptotic relation \eqref{d2-minimal-asymp} also gives an independent
check of the exponential type of the functions $\Phi_j$. It implies
\[
\frac{2}{e}\lim_{n\to\infty}n|c_{j,n}|^{1/(2n)}=\frac12,\quad j=1,2,
\]
which agrees with the type of $\Phi_j$. This also confirms
Corollary~\ref{cor-Phi-growth} for $d=2$.

To compute the minimal solutions, we use Miller's algorithm
(see \cite{Ga67}). Define the residuals
$F_{j,J}(a_2,\beta)$, $j=1,2$. For fixed $a_2$, $\beta$, and $J$, they are
computed as follows: set
\[
c_{j,J}^{(J)}=1,\quad c_{j,J+1}^{(J)}=0,
\]
run the corresponding recurrence \eqref{d2-rec} backward down to the index
$-1$, and put
\[
F_{j,J}(a_2,\beta) =\frac{c_{j,-1}^{(J)}}{c_{j,0}^{(J)}}.
\]
Here the denominator does not vanish when $c_{j,-1}^{(J)}=0$, since
$c_{j,-1}^{(J)}=c_{j,0}^{(J)}=0$ would imply, by the recurrence,
$c_{j,n}^{(J)}=0$ for all $n$, contradicting $c_{j,J}^{(J)}=1$.

For a chosen $J$, we find the parameters $a_2$ and $\beta$ by Newton's method
from the system
\begin{equation}\label{d2-numerical-system}
F_{1,J}(a_2,\beta)=0,\quad
F_{2,J}(a_2,\beta)=0.
\end{equation}
For the initial value of $J$, we may start with
\[
a_2=7,\quad \beta=-0.3.
\]
Then we increase $J$ and, for each new value, solve
\eqref{d2-numerical-system}, using the pair $(a_2,\beta)$ found at the
preceding step as the initial approximation for Newton's method. The
resulting values stabilize rapidly as $J$ increases. After determining the
parameters, the minimal solutions are normalized by $c_{j,0}=1$.

A numerical computation at $120$-digit precision gives
\[
\begin{array}{c|r@{}l}
\text{quantity}&
\multicolumn{2}{c}{\text{numerical value}}\\ \hline
a_2& 7&.08855214513069412139\ldots\\
\beta& -0&.30466567351975548186\ldots\\
\mathcal C_2=1/(2\pi a_2)& 0&.02245239081738615612\ldots\\
\mathcal L^*(2)=2/a_2& 0&.28214506418970912412\ldots
\end{array}
\]

\section{Zeta interpretation}\label{sec-zeta}

In \cite{Bo25}, for $d=1$, a relation was established between the constant
$a_1=R_1$ and a zeta function built from the zeros $\tau_k$ of the extremal
function. In \cite{Go26}, this relation was extended to all odd $d$. Here we
consider an arbitrary dimension. In particular, we show that the coefficients
$r_j$ of the polynomial
\[
R_d(z)=\sum_{j=0}^{N+\epsilon}r_jz^{2j}
\]
have a natural interpretation as regularized values of alternating power sums
over the zeros of the extremal function.

\subsection{Mellin formula}
For $\Re s>0$, put
\begin{equation}\label{eta-def}
\eta_{\tau,d}(s)=
\sum_{k=1}^{\infty}\frac{(-1)^{k-1}}{\tau_k^s}.
\end{equation}
Since $\tau_k\sim\pi k$, the series converges conditionally for
$0<\Re s\le1$ and absolutely for $\Re s>1$.

For the function $M$ in \eqref{def-M}, put
\[
\tilde M(z)=M(z)-\frac{2\epsilon}{z}, \quad
\chi_0(s)=\cos\frac{\pi s}{2}, \quad
\chi_1(s)=\sin\frac{\pi s}{2}.
\]
We have
\[
\tilde M(iy)=4i^\epsilon\sum_{k=1}^{\infty}(-1)^{k-1}\,
\frac{y^\epsilon\tau_k^{1-\epsilon}}{y^2+\tau_k^2}.
\]
Since
\[
\int_0^\infty \frac{y^\epsilon\tau^{1-\epsilon}}{y^2+\tau^2}\,y^{-s}\,dy
=\frac{\pi\tau^{-s}} {2\sin\bigl(\pi(\epsilon-s+1)/2\bigr)}
=\frac{\pi\tau^{-s}}{2\chi_\epsilon(s)},
\]
in the strip $0<\Re s<1$ we obtain
\begin{equation}\label{eta-Mellin}
\eta_{\tau,d}(s)= \frac{\chi_\epsilon(s)}{2\pi i^\epsilon} \int_0^\infty \tilde
M(iy)y^{-s}\,dy.
\end{equation}
Termwise integration is justified by summation by parts, while convergence at
infinity follows from \eqref{M-iy-decay}.

\subsection{Analytic continuation}
Since $\supp\widehat{M^+}\subset[0,\infty)$, for $y>0$ the Fourier--Laplace
inversion formula gives
\[
M(iy)=\frac1{2\pi}\, \bigl\langle \widehat{M^+}(t),e^{-yt}\bigr\rangle.
\]
Hence, by Lemma~\ref{lem-Cauchy} and the identity
\[
\int_0^\infty t^m e^{-yt}\,dt=\frac{m!}{y^{m+1}}
\]
we obtain, as $y\to+\infty$, 
\begin{equation}\label{Mtilde-asympt}
\tilde M(iy) =-4\sum_{j=0}^{N+\epsilon} \frac{r_j}{(iy)^{d-2j+1}} +O(y^Le^{-y})
\end{equation}
with some integer $L\ge0$.

Formula \eqref{Mtilde-asympt} allows us to analytically continue
\eqref{eta-Mellin} to the whole complex plane. For $\Re s<1$, put
\[
\begin{aligned}
I_0(s)&=\int_0^1\tilde M(iy)y^{-s}\,dy,\\ I_\infty(s)&=\int_1^\infty
\biggl(\tilde M(iy)+4\sum_{j=0}^{N+\epsilon}
\frac{r_j}{(iy)^{d-2j+1}}\biggr)y^{-s}\,dy.
\end{aligned}
\]
Then, by analytic continuation,
\begin{equation}\label{eta-cont}
\eta_{\tau,d}(s) =\frac{\chi_\epsilon(s)}{2\pi i^\epsilon}
\biggl(I_0(s)+I_\infty(s) -4\sum_{j=0}^{N+\epsilon} \frac{r_j
i^{-(d-2j+1)}}{s+d-2j}\biggr).
\end{equation}
The function $I_0$ is holomorphic for $\Re s<1$, while $I_\infty$ is entire.
The possible poles of the last sum are canceled by the corresponding simple
zeros of $\chi_\epsilon$. Hence $\eta_{\tau,d}$ extends to an entire function.

\subsection{Zeta interpretation of the coefficients of $R_d$}
From \eqref{eta-cont}, for all $0\le j\le N+\epsilon$, we obtain
\begin{equation}\label{eta-values}
\eta_{\tau,d}(-d+2j)=r_j.
\end{equation}
In particular,
\[
\eta_{\tau,d}(-d)=r_0=a_d.
\]

\subsection{Value at zero}
For even $d$, the endpoint case $j=N+1$ in \eqref{eta-values} immediately
gives
\[
\eta_{\tau,d}(0)=r_{N+1}=\frac12.
\]

We show that the same identity holds for odd $d$. In this case
$\epsilon=0$, $\tilde\sigma=\sigma$, and by~\eqref{def-Pd},
\[
\widehat\sigma=\frac12\,P_d\quad \text{on $(-1,1)$}.
\]
In particular, $\widehat\sigma$ is an ordinary function in a neighborhood of
zero and has no component supported at zero. Indeed, let
$g\in L^1(\mathbb R)$ be such that $\widehat g\in C_c(\mathbb R)$. Then for
all sufficiently large $X$, using $\widehat\sigma=P_d/2$ in a neighborhood
of zero, we have
\[
\frac1X\int_{\mathbb R}\sigma(x)g(x/X)\,dx
=\frac1{4\pi X}\int_{\mathbb R}P_d(t/X)\widehat g(-t)\,dt
=O(X^{-1}).
\]
Approximating $\mathbf1_{(0,1)}$ in $L^1(\mathbb R)$ by such functions $g$,
we obtain
\begin{equation}\label{sigma-mean-zero}
\frac1X\int_0^X\sigma(x)\,dx\to0,\quad X\to\infty.
\end{equation}

Put
\[
E=\bigcup_{k\ge1}\,(\tau_{2k-1},\tau_{2k}),\quad A(X)=|E\cap(0,X)|.
\]
On the positive half-line, $\sigma=-1$ on $E$ and $\sigma=1$ outside $E$
almost everywhere, hence
\[
\int_0^X \sigma(x)\,dx=X-2A(X)
\]
and \eqref{sigma-mean-zero} gives
\begin{equation}\label{E-density}
A(X)=\frac X2+o(X).
\end{equation}
For $s>0$, pairing consecutive terms in \eqref{eta-def} gives
\[
\eta_{\tau,d}(s) =s\int_E x^{-s-1}\,dx =s(s+1)\int_{\tau_1}^\infty
A(x)x^{-s-2}\,dx.
\]
By \eqref{E-density}, as $s\downarrow0$ the right-hand side tends to $1/2$.
Since $\eta_{\tau,d}$ is holomorphic at zero, for every dimension we obtain
\[
\eta_{\tau,d}(0)=\frac12.
\]

\subsection{Trivial zeros}
In addition, $\eta_{\tau,d}$ has the trivial zeros
\[
\eta_{\tau,d}(-d-2m)=0,\quad m\ge1.
\]
Indeed, at these points $\chi_\epsilon(s)=0$, while the remaining factor in
\eqref{eta-cont} is holomorphic.

\section{Additional consequences}\label{sec-final}

\subsection{Expansion for $1/\varphi$ and zero equilibrium}
From \eqref{phi-Theta}, \eqref{def-Theta}, we obtain
\[
\frac1{\varphi(z)} =\frac{z^{d+1}M(z)-2\epsilon z^d+4R_d(z)}{4a_d}.
\]
Substituting the series \eqref{def-M}, we find
\begin{equation}\label{reciprocal-series}
\frac1{\varphi(z)} =\frac{R_d(z)}{a_d} +\frac{z^{d+1+\epsilon}}{a_d}
\sum_{k=1}^{\infty}(-1)^k\,\frac{\tau_k^{1-\epsilon}}{z^2-\tau_k^2}.
\end{equation}
The series converges locally uniformly away from the poles for the same reason
as the series for~$M$ in \eqref{def-M}. Comparing the residues at
$z=\tau_k$, we obtain
\[
\frac1{\varphi'(\tau_k)} =\frac{(-1)^k\tau_k^{d+1}}{2a_d} =(-1)^kv_{d}\mathcal
C_d\tau_k^{d+1}.
\]
Equivalently,
\begin{equation}\label{derivative-identity}
\tau_k^{d+1}\varphi'(\tau_k)=2(-1)^ka_d.
\end{equation}
Differentiating the canonical product for $\varphi$ and comparing with
\eqref{derivative-identity}, we obtain the multiplicative equilibrium condition for the zeros
\begin{equation}\label{a-tau}
a_d=\tau_k^d\prod_{\substack{j\ge1,\;j\ne k}}
\Bigl|1-\frac{\tau_k^2}{\tau_j^2}\Bigr|.
\end{equation}

This condition gives a natural numerical algorithm valid in all dimensions.
For sufficiently large $J$, the zeros with $j>J$ can be described by an
asymptotic model
\[
\frac{\tau_j}{\pi}
=s_j+\frac{b_1}{s_j}+\frac{b_2}{s_j^3}+\cdots,
\quad s_j=j+\frac d2.
\]
After substituting this model, the infinite product over $j>J$ can be
evaluated explicitly as a ratio of finitely many gamma functions using
Euler's product formula for the gamma function. Thus the equilibrium condition reduces to a finite nonlinear
system for the initial zeros and the parameters of the asymptotic model, which
can be solved, for example, by Newton's method. The graph in
Subsection~\ref{subsec-asymp-d} was obtained in this way.

\subsection{Signs of the polynomial coefficients}
All coefficients of the even polynomial
\[
R_d(z)=\sum_{j=0}^{N+\epsilon}r_jz^{2j}
\]
from \eqref{intro-Rd} are positive. Indeed, using the canonical product
\eqref{phi-product}, put
\[
H(z)=\frac1{\varphi(z)} =\prod_{k\ge1}\Bigl(1-\frac{z^2}{\tau_k^2}\Bigr)^{-1}.
\]
In the disk $|z|<\tau_1$, expanding each factor into a geometric series shows
that the Taylor series of $H$ at zero has strictly positive even
coefficients. On the other hand, by \eqref{reciprocal-series},
\[
R_d(z)=a_dH(z)+O(z^{d+1+\epsilon}),\quad z\to0.
\]
Since $\deg R_d\le d-1+\epsilon$, all coefficients $r_j>0$.

By definition \eqref{intro-Kd},
\[
K_d(z)=\frac14\,z^{d+1}+2\sum_{n=0}^N
\biggl(\sum_{j=0}^nr_j\lambda_{n-j}\biggr)z^{2n+\epsilon}.
\]
Since \eqref{def-lambda} gives $\lambda_q<0$, while $r_j>0$, all coefficients
of $z^{2n+\epsilon}$, $0\le n\le N$, are negative. Therefore
\[
K_d(z)=\frac14\,z^{d+1}-\sum_{n=0}^Nk_nz^{2n+\epsilon},
\quad k_n>0.
\]
We also note that for $x>0$ this gives
\[
K_d(x)\le\frac{x^{d+1}}4.
\]
Hence
\[
V_d(x) =\frac{K_d(x)}{x^{d+1}} -\frac{R_d(x)^2}{x^{2d+2}}
-\frac{d^2-1}{4x^2}\le \frac14-\frac{d^2-1}{4x^2}, \quad x>0.
\]
By Sturm comparison, this estimate gives control of the number of zeros of
the corresponding equation and will be useful in studying the asymptotics as $d\to\infty$.

\subsection{Refined asymptotics of $\mathcal L^*(d)$}\label{subsec-asymp-d}
The asymptotics of the normalized Nikolskii constant $\mathcal L^*(d)$ from
\eqref{def-Lstar} as $d\to\infty$ will be studied in a separate paper. Based
on the results of the present paper, the correct exponential order will be
established there:
\[
\mathcal L^*(d)=2^{-d(1+o(1))},\quad d\to\infty.
\]

Numerical experiments indicate the more precise asymptotic relation
\begin{equation}\label{emp-form}
2^d\mathcal L^*(d)=\frac{\pi}{2}-cd^{-1/3}(1+o(1)), \quad c\approx0.777,
\end{equation}
as illustrated by the following graph:
\[
\includegraphics[scale=.6]{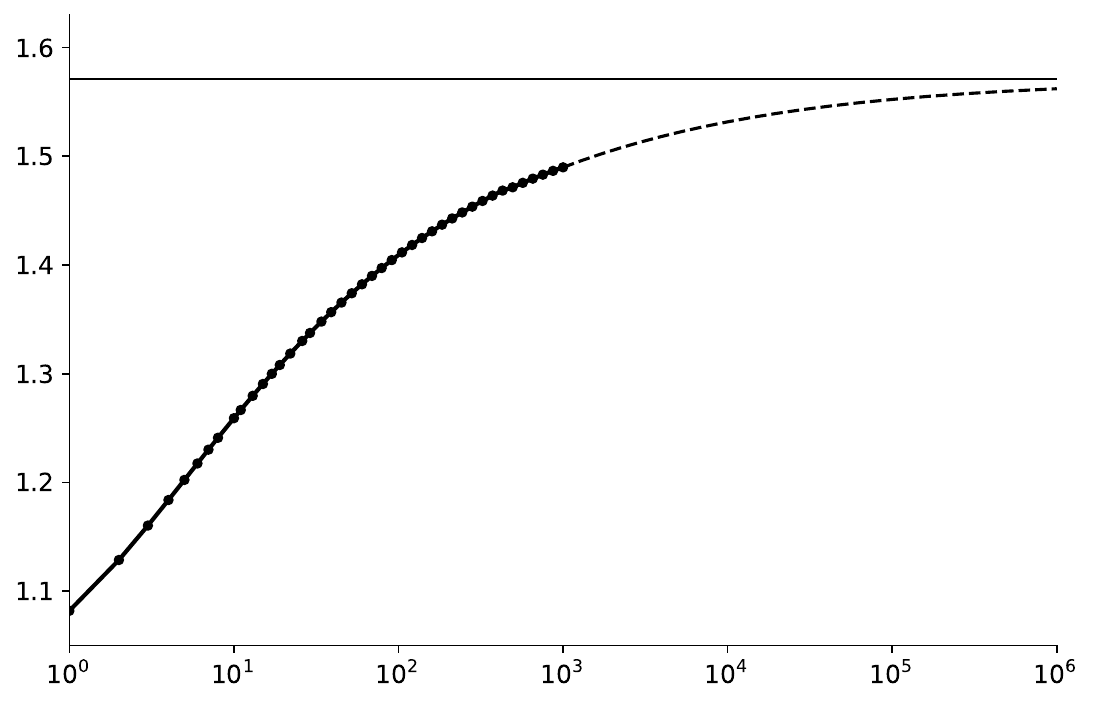}
\]
The solid line with dots shows the numerical values of
$2^d\mathcal L^*(d)$, while the dashed line shows their empirical
extrapolation given by \eqref{emp-form}. The horizontal line corresponds to the value
$\pi/2$.

\subsection*{Acknowledgments}
The author is grateful to the artificial intelligence model used in preparing
the paper for its assistance.

\end{document}